\documentclass[11pt, letterpaper]{article}

\usepackage{times}
\usepackage{wrapfig}
\usepackage{sectsty}
\sectionfont{\large}
\subsectionfont{\normalsize}
\subsubsectionfont{\small}
\usepackage{listings}
\usepackage{caption}
\usepackage{subcaption}

\captionsetup{labelfont=bf}

\usepackage{amsfonts}
\usepackage{microtype}
\usepackage{graphicx}

\usepackage{booktabs}

\usepackage{multicol}
\usepackage{algorithm}
\usepackage{algpseudocode}
\usepackage{enumitem}

\usepackage{setspace}
\usepackage{natbib}

\usepackage{amsmath}
\usepackage{amssymb}
\usepackage{mathtools}
\usepackage{amsthm}

\usepackage[colorlinks,
            linkcolor=red,
            anchorcolor=blue,
            citecolor=blue
            ]{hyperref}
\usepackage[capitalize,noabbrev]{cleveref}

\DeclareFontFamily{U}{mathb}{}
\DeclareFontShape{U}{mathb}{m}{n}{
      <-6> mathb5
      <6-7> mathb6
      <7-8> mathb7
      <8-9> mathb8
      <9-10> mathb9
      <10-12> mathb10
      <12-> mathb12
}{}
\DeclareSymbolFont{mathb}{U}{mathb}{m}{n}

\DeclareMathSymbol{\drsh}{3}{mathb}{"EB}

\theoremstyle{plain}
\newtheorem{theorem}{Theorem}[section]
\newtheorem{proposition}[theorem]{Proposition}

\theoremstyle{definition}
\newtheorem{definition}[theorem]{Definition}
\newtheorem{assumption}[theorem]{Assumption}

\usepackage{smile}
\usepackage{makecell}

\usepackage{amsmath,amsfonts,bm}

\newcommand*{\dif}{\mathop{}\!\mathrm{d}}

\DeclareMathOperator\supp{supp}
\DeclareMathOperator\diam{diam}

\newcommand{\interior}[1]{%
  {\kern0pt#1}^{\mathrm{o}}%
}

\def\1{\bm{1}}

\def\vzero{{\bm{0}}}
\def\vone{{\bm{1}}}

\def\va{{\bm{a}}}
\def\vb{{\bm{b}}}

\def\vd{{\bm{d}}}

\def\vh{{\bm{h}}}

\def\vv{{\bm{v}}}

\def\vx{{\bm{x}}}
\def\vy{{\bm{y}}}
\def\vz{{\bm{z}}}

\def\eva{{a}}

\def\evc{{c}}
\def\evd{{d}}

\def\evh{{h}}

\def\evv{{v}}

\def\evy{{y}}

\def\mLambda{{\bm{\Lambda}}}

\def\mLambda{{\bm{\Lambda}}}
\def\mDelta{{\bm{\Delta}}}

\DeclareMathAlphabet{\mathsfit}{\encodingdefault}{\sfdefault}{m}{sl}
\SetMathAlphabet{\mathsfit}{bold}{\encodingdefault}{\sfdefault}{bx}{n}

\def\gA{{\mathcal{A}}}

\def\gE{{\mathcal{E}}}

\def\gG{{\mathcal{G}}}
\def\gH{{\mathcal{H}}}
\def\gI{{\mathcal{I}}}
\def\gJ{{\mathcal{J}}}
\def\gK{{\mathcal{K}}}

\def\gN{{\mathcal{N}}}

\def\gR{{\mathcal{R}}}

\def\gV{{\mathcal{V}}}
\def\gW{{\mathcal{W}}}
\def\gX{{\mathcal{X}}}
\def\gY{{\mathcal{Y}}}

\def\sN{{\mathbb{N}}}

\def\sR{{\mathbb{R}}}

\def\emLambda{{\Lambda}}

\def\emDelta{{\Delta}}

\DeclareMathOperator*{\argmin}{arg\,min}

\newcommand\extrafootertext[1]{%
    \bgroup
    \renewcommand\thefootnote{\fnsymbol{footnote}}%
    \renewcommand\thempfootnote{\fnsymbol{mpfootnote}}%
    \footnotetext[0]{#1}%
    \egroup
}
\usepackage[textsize=tiny]{todonotes}
\usepackage{fullpage}

\usepackage{float}
\usepackage{titlesec}

\titleformat{\section}
  {\normalfont\large\bfseries}{\thesection}{1em}{}
  
\titleformat{\subsection}
  {\normalfont\normalsize\bfseries}{\thesubsection}{1em}{}
  
\titleformat{\subsubsection}
  {\normalfont\normalsize\bfseries}{\thesubsubsection}{1em}{}
  
\titlespacing*{\section}{0pt}{5pt plus 1pt minus 1pt}{2pt plus 1pt minus 1pt}
\titlespacing*{\subsection}{0pt}{4pt plus 1pt minus 1pt}{2pt plus 1pt minus 1pt}

\titlespacing*{\paragraph}{0pt}{2pt plus 1pt minus 1pt}{5pt plus 1pt minus 1pt}

\titlespacing*{\subsubsection}{0pt}{4pt plus 1pt minus 1pt}{1pt plus 1pt minus 1pt}

\parindent 0pt
\topsep 4pt plus 1pt minus 2pt
\partopsep 1pt plus 0.5pt minus 0.5pt
\itemsep 2pt plus 1pt minus 0.5pt
\parsep 2pt plus 1pt minus 0.5pt
\parskip 6pt

\leftmargin 2em \leftmargini\leftmargin \leftmarginii 2em
\leftmarginiii 1.5em \leftmarginiv 1.0em \leftmarginv .5em
\leftmarginvi .5em
\labelwidth\leftmargini\advance\labelwidth-\labelsep \labelsep 5pt

\def\@listi{\leftmargin\leftmargini}
\def\@listii{\leftmargin\leftmarginii
   \labelwidth\leftmarginii\advance\labelwidth-\labelsep
   \topsep 2pt plus 1pt minus 0.5pt
   \parsep 1pt plus 0.5pt minus 0.5pt
   \itemsep \parsep}
\def\@listiii{\leftmargin\leftmarginiii
    \labelwidth\leftmarginiii\advance\labelwidth-\labelsep
    \topsep 1pt plus 0.5pt minus 0.5pt
    \parsep \z@ \partopsep 0.5pt plus 0pt minus 0.5pt
    \itemsep \topsep}
\def\@listiv{\leftmargin\leftmarginiv
     \labelwidth\leftmarginiv\advance\labelwidth-\labelsep}
\def\@listv{\leftmargin\leftmarginv
     \labelwidth\leftmarginv\advance\labelwidth-\labelsep}
\def\@listvi{\leftmargin\leftmarginvi
     \labelwidth\leftmarginvi\advance\labelwidth-\labelsep}

\usepackage{authblk}

\title{\textbf{\Large{
Cardinality-Constrained Bilevel Capacity Expansion
}}}

\author
{\normalsize
Lei Guo$^{1}$
\qquad  Jiayang Li$^{2, *}$
}

\date{}

\begin{document}
\extrafootertext{$^{1}$School of Business, East China University of Science and Technology. $^2$Department of Data and Systems Engineering, The University of Hong Kong. $^*$Corresponding author; email: \texttt{jiayangl@hku.hk}.
}

\maketitle

\begin{abstract}
As a fundamental problem in transportation and operations research, the bilevel capacity expansion problem (BCEP) has been extensively studied for decades. In practice, BCEPs are commonly addressed in two stages: first, preselect a small set of links for expansion; then, optimize their capacities. However, this sequential and separable approach can lead to suboptimal solutions as it neglects the critical interdependence between link selection and capacity allocation. In this paper, we propose to introduce a cardinality constraint into the BCEP to limit the number of expansion locations rather than fixing such locations beforehand. This allows us to search over all possible link combinations within the prescribed limit, thereby enabling the \emph{joint} optimization of both expansion locations and capacity levels. The resulting cardinality-constrained BCEP (CCBCEP) is computationally challenging due to the combination of a nonconvex equilibrium constraint and a nonconvex and discontinuous cardinality constraint. To address this challenge, we develop a penalized difference-of-convex (DC) approach that transforms the original problem into a sequence of tractable subproblems by exploiting its inherent DC structure and the special properties of the cardinality constraint. We prove that the method converges to approximate Karush–Kuhn–Tucker (KKT) solutions with arbitrarily prescribed accuracy. Numerical experiments further show that the proposed approach consistently outperforms alternative methods for identifying practically feasible expansion plans investing only a few links, both in solution quality and computational efficiency.

\vspace{3pt}
\noindent
\textit{\textbf{Keywords}:} capacity expansion problem, network design problem, bilevel programming, difference-of-convex program, cardinality constraint
    \vspace{1em}
\end{abstract}

\section{Introduction}\label{sec:Intro}

A bilevel capacity expansion problem (BCEP) considers how a traffic planner can most effectively expand a congested road network, while accounting for the self-interested route choices of travelers \citep{leblanc1975algorithm, abdulaal1979continuous}. The emergence of BCEPs in the 1970s was closely linked to the rapid urbanization and economic growth experienced in many countries during the postwar period. As cities expanded and populations grew, travel demand quickly outpaced the capacity of existing transportation infrastructure. BCEP has since become a useful approach for assisting traffic planners in infrastructure investment decisions, thereby effectively mitigating congestion and enhancing the overall efficiency of transportation networks \citep[see, e.g.,][]{dantzig1979formulating,marcotte1986network,friesz1992simulated,yang2001transport,meng2001equivalent,ban2006general,josefsson-Patriksson2007,li2012,gairing2017complexity, li2022differentiable,guo2025penalized}.

The focus of the present study is a challenging BCEP scenario in which a large number of links are practically eligible for expansion --- a situation that often arises in real-world networks. For instance, in the Anaheim network \citep{TransportationNetworks}, a medium-sized network widely used in transportation research, hundreds of links along one interstate (I-5) and four state highways (SR-22, SR-55, SR-57, SR-91) have been identified as candidates for expansion in recent years \citep{octa_i5_2025, caltrans_sr55_2022, dya_sr22_2025, usdotsr91_2025, ctc_0p670_2018}. In larger, regionally coordinated planning efforts, the number of links eligible for expansion can be even higher. In such scenarios, a key challenge is that the optimal solution to the corresponding BCEP may recommend expanding \textit{every} eligible link, which may be theoretically optimal but rarely practical. Specifically, expanding all selected links simultaneously would severely disrupt network operations and create acute congestion during construction, while expanding them sequentially would greatly extend the construction timeline --- potentially over many years or even decades --- leading to prolonged inconvenience and inefficiency.

Due to the impracticality of expanding all eligible links, BCEP studies on real-world networks typically assume that a \textit{small} set of candidate links has been selected externally beforehand for potential capacity expansion \citep[see, e.g.,][]{suwansirikul1987equilibrium, friesz1992simulated, chow2010faster, guo2025penalized}. However, few have systematically investigated how such a subset can be determined, except for some heuristics such as performing an interpretable prescreening of all eligible links. 
For instance, planners may begin by evaluating the marginal effect of expanding unit capacity on each candidate link --- specifically, the reduction in total congestion that such an investment would yield --- which can be obtained through sensitivity analysis at the current equilibrium state \citep{dafermos1983iterative, tobin1988sensitivity, yang2007sensitivity, li2020end}. Links whose expansion would paradoxically increase total congestion at equilibrium --- thus triggering the \citet{braess1968paradoxon} paradox --- can then be excluded from consideration \citep{roughgarden2005selfish, park2011detecting, bagloee2014heuristic}. Conversely, links where a small investment yields substantial congestion reduction may be included in the expansion plan.  However, the prescreening approach has several notable disadvantages. First, it relies solely on local sensitivity information, which may not reflect the true impact of larger, coordinated capacity expansions. Second, by assessing the effect of investing in each link independently, the approach does not account for network-wide changes that might emerge once multiple links are expanded together.

\subsection{Our approach}

Unlike conventional approaches that first prescreen candidate links and then solve a standard BCEP on the selected subset, we propose to optimize link selection and capacity expansion simultaneously within a unified framework. To this end, we introduce a cardinality constraint into the standard BCEP formulation, which explicitly limits the maximum number of links that can be expanded. The optimal solution to the resulting model, referred to as a cardinality constrained BCEP (CCBCEP), can thus be interpreted as the most effective selection of up to a specified number of links to maximize congestion relief while accounting for monetary cost.

Despite the advantages of CCBCEP in jointly identifying optimal expansion locations and levels, the problem is challenging to solve. Even the classic BCEP is difficult: since travelers' route choices in response to capacity expansion are typically predicted by the user equilibrium (UE) condition \citep{wardrop1952road}, the problem is formulated as a bilevel program and is known to be NP-hard due to the non-convexity and non-regularity of the equilibrium constraint \citep{colson2007overview, gairing2017complexity}. CCBCEP is even more complex, as it involves both the equilibrium constraint and the cardinality constraint. The main challenge of the cardinality constraint lies in the discontinuity and non-convexity of the cardinality function at points where variables transition from non-zero values to zero.

To devise an algorithm for solving such problems, we first reformulate the lower-level UE problem using a special gap function, which is non-positive if and only if the UE condition is satisfied. Under mild conditions, this gap function is continuously differentiable and possesses a difference-of-convex (DC) structure, that is, it can be expressed as the difference of two convex functions. By penalizing this gap function into the objective, the equilibrium constraint is relaxed to a much simpler flow-conservation constraint, which requires that the total flow on all routes for each OD pair matches the demand. Despite this relaxation, the penalized problem remains challenging. The turning point is that by linearizing one convex component of the objective, the resulting approximate problem becomes tractable. 

A key observation is that the objective in these approximate problems is separable with respect to the lower-level flow variables when the upper-level expansion variables are fixed, and vice versa. We find and prove that such a separability (i) reduces the minimization over the flow-conservation constraint to a standard traffic assignment problem, which can be efficiently solved using existing UE algorithms, and (ii) reduces the minimization over the cardinality set to a set of one-dimensional strongly convex minimization problems, which can be efficiently solved using Newton's methods or bisection methods. Motivated by these properties, we propose an algorithm for the approximate problem that alternates between optimization over the flow-conservation constraint and the cardinality constraint. Based on the algorithm for solving the approximate problem, we eventually develop an algorithm for the original CCBCEP by sequentially solving these approximate problems and adaptively updating the penalty parameter for the equilibrium constraint.

In addition to developing the model and algorithm, our contribution also includes a theoretical justification of the algorithm's convergence to approximate KKT stationary solutions under mild conditions, as well as a thorough validation of its performance through extensive numerical experiments. Our numerical results demonstrate that, across all tested instances, the solution produced by our model (i) can yield expansion plans that only invests a very small subset of links but are still effective, and (ii) consistently outperforms alternative approaches in solution quality and computational efficiency.

\subsection{Related work}
\label{sec:related}

In the literature, numerous algorithms have been developed for classic BCEPs in which the candidate expansion links are already specified. These algorithms can generally be classified into four categories: (i) indirect algorithms that solve an approximation model; (ii) equilibrium solution-based algorithms; (iii) equilibrium condition-based algorithms; (iv) value function-based algorithms.

The first class seeks to solve a simplified problem whose solution is expected to approximate that of the original BCEP. Two representative examples are the iterative optimization–assignment (IOA) algorithm \citep{tan1979hybrid}, which alternates between optimizing the designer's decisions and the travelers' responses while fixing the other party's decisions \citep{universite1981design, friesz1985properties, marcotte1986network, marcotte1992efficient}, and the system-optimal approach \citep{dantzig1979formulating}, which simplifies the BCEP by assuming that travelers cooperate with the designer to achieve the same objective. Such schemes may yield good approximate solutions in certain scenarios.

For equilibrium solution-based algorithms, the equilibrium solution at the lower level is treated as an implicit best response mapping with respect to upper decision variables, and embed into the upper-level objective, reducing the bilevel problem to a single-level implicit composition function minimization. Upper decision variables are updated by computing and comparing values of the composite function, such as the Hooke–Jeeves algorithm \citep{abdulaal1979continuous} and simulated annealing \citep{friesz1992simulated}, or by computing the gradient which depends on the Jacobian of the best response function \citep{friesz1990, yang2005mathematical, josefsson-Patriksson2007}. A conventional approach to computing this Jacobian is to implicitly differentiate the UE conditions, which is often computationally prohibitive due to the need to invert large matrices \citep{tobin1988sensitivity, dafermos1988sensitivity, yang2007sensitivity}.  More recently, automatic differentiation (AD) \citep{griewank1989automatic} has emerged as a promising tool for scaling descent algorithms for BCEPs. Specifically, it has been shown that one can compute the gradient of a bilevel program by applying AD to a smooth algorithm that solves the lower-level UE problem \citep{li2020end, li2022differentiable}. However, a significant challenge remains in the substantial memory required to store the entire computational process. We refer the readers to \citet{mckenzie2024three} and \citet{yu2025scalable} for recent advances aimed at overcoming this limitation.

Instead of directly addressing the implicit equilibrium solution, equilibrium condition-based algorithms first formulate the lower-level equilibrium requirement as a set of nonlinear complementarity constraints or, equivalently, a variational inequality problem \cite{dafermos1980traffic}, and then solve these reformulations. Due to a huge number of constraints in these reformulations, \cite{marcotte1983} proposed a constraint accumulation method, whereas \cite{lim2002transportation} proposed a path generation-based method. As other examples, \cite{wang2010} proposed to linearize all nonlinear functions in the reformulation and solve the resulting mixed linear programming problem, and \cite{li2012} utilized the gap function of a variational inequality and proposed a global optimization method.

The final class of algorithms recasts the lower‐level UE conditions through a gap function whose value is nonnegative and zero only when the UE conditions hold. The approach avoids introducing huge number of constraints compared with the equilibrium condition-based reformulation. By showing the gap function is continuously differentiable, standard augmented Lagrangian methods are applied to solve this reformulation \citep{meng2001equivalent,yang2005mathematical}. More recently, \citet{guo2025penalized} showed that this gap function is a DC function and then designed a penalized sequential programming scheme to achieve far better scalability.

Although a rich body of algorithms exists for BCEPs once the set of expandable links is fixed, the literature offers little guidance on how that set should be chosen in the first place. In most empirical studies on benchmark networks --- such as Sioux Falls, Anaheim, and Chicago Sketch --- the authors choose a small subset of links for expansion without much explanation of the selection process \citep{suwansirikul1987equilibrium, friesz1992simulated, chow2010faster, guo2025penalized}.
Recently, \citet{guo2024cardinality} investigates a similar model where links for congestion pricing and the toll on these links are optimized together. However, the algorithm by \citet{guo2024cardinality} cannot be adopted to solve CCBCEPs, as it crucially relies on the multi-convexity structure --- a property that these problems lack. To the best of our knowledge, the present work is the first to treat link selection as an integral part of finding optimal expansion plans.

\subsection{Organization}
The remainder of this paper is organized as follows. Section \ref{sec:Model} introduces the novel CCBCEP model. In Section \ref{sec:newmodel}, we reformulate the model and analyze its theoretical properties. Section \ref{sec:Algo} presents the proposed algorithm for solving the CCBCEP and establishes its convergence under standard conditions. Extensive numerical experiments on publicly available networks are reported in Section \ref{sec:num}. Finally, Section \ref{sec:Conclusion} concludes the paper with key findings and future research directions.

\subsection{Notation}

We use $\sR$, $\sR_+$, and $\sN$ to denote the set of real, non-negative real, and natural numbers. The indicator function of a condition $\gE$ is denoted by $\mathbf{1}_{\gE}$, which equals $1$ if $\gE$ is true and $0$ otherwise. For a vector $\va \in \sR^n$, we denote its support as $\supp{(\va)} = \{i: \eva_i > 0\}$ and its $\ell_1$, $\ell_2$, and $\ell_{\infty}$ norms as $\|a\|_1$, $\|a\|_2$, and $\|a\|_{\infty}$, respectively. 
For two vectors $\va, \vb \in \sR^n$, their inner product is denoted as $\langle \va, \vb \rangle$. For a finite set $\gA$, we write $|\gA|$ as its number of elements and $2^{\gA}$ as the set of its subsets. Given a closed set $\gX \in \sR^n$, we define its diameter as $\diam(\gX) = \max_{\vx^1, \vx^2 \in \gX} \|\vx^1 - \vx^2\|_2$.
For any $\va \in \sR^n$ and a nonempty set $\gX \subseteq \sR^n$, we use $\operatorname{dist}(\va, \gX) = \inf\{ \|\va - \vb\|_2 : \vb \in \gX\}$ to denote the distance from $\va$ to $\gX$. Given any closed set $\gX \subseteq \sR^n$ and $\bar \vx \in \gX$, the limiting normal cone to $\gX$ at $\bar{\vx}$ is defined as
\[
\cN_{\gX}(\bar{\vx}) = \left\{\bar{\vz}: \exists~ \vx^k\to \bar{\vx}~\text{and}~\vz^k\to \bar{\vz}~\text{with}~\vx^k \in \gX, \vz^k \in \sR^n~\text{and}~\lim_{\vx\to \vx^k}\frac{\langle \vz^k, \vx - \vx^k\rangle}{\|\vx-\vx^k\|_2} \leq 0, \ \forall k\right\}
\]
When $\gX$ is a convex and closed set, the limiting normal cone $\cN_{\gX}(\bar{\vx})$ reduces to $\{\bar \vz \in \sR^n: \langle \bar \vz, \vx - \bar \vx\rangle \leq 0,\ \forall \vx\in \gX\}$, which is the standard normal cone from convex analysis.

\section{Problem Setting}\label{sec:Model}

Consider a transportation network modeled as a strongly connected directed graph $\gG(\gN, \gA)$, where $\gN$ and $\gA$ are the sets of nodes and links, respectively. The set of all origin-destination (OD) pairs is denoted by $\gW \subseteq \gN \times \gN$, and the set of paths between all OD pairs is denoted as $\gR \subseteq 2^{\gA}$. For each OD pair $w \in \gW$, let $\gR_w \subseteq \gR$ be the set of routes connecting $w$, and for each route $r \in \gR$, let $\gA_r \subseteq \gA$ be the set of links composing $r$. Further, let $\emLambda_{w,r}$ as the OD-route incidence with $\emLambda_{w,r} = 1$ if the route $r \in \gR_w$ and 0 otherwise, and $\emDelta_{a,r}$ as the link-route incidence, with $\emDelta_{a,r} = 1$ if $a \in \gA_r$ and 0 otherwise. Let $\mDelta = (\emDelta_{a,r})_{a \in \gA, r \in \gR}$ and $\mLambda = (\emLambda_{w,r})_{w \in \gW, r \in \gR}$. Suppose travel demand is given by $\vd = (\evd_w)_{w \in \gW}$, where $\evd_w$ denotes the number of travelers for OD pair $w$. Let $\vh = (\evh_r)_{r \in \gR}$ and $\vv = (\evv_a)_{a \in \gA}$ denote the path and link flow patterns, respectively. Their feasible regions are then $\gH = \{\vh \in \sR^{|\gR|}: \mLambda \vh = \vd, \ \vh \geq 0\}$ and $\gV = \{\vv \in \sR^{|\gA|}: \vv = \mDelta \vh, \ \vh \in \gH \}$, respectively.

A capacity expansion problem concerns a network designer seeking to expand the capacity of existing links. Let $\vy = (y_a)_{a \in \gA}$ denote the planner's decision, where each $y_a$ is the additional capacity assigned to link $a$. Suppose that the decision vector must lie within the feasible region $\gY = \{\vy \in \sR^{|\gA|}: 0 \leq \evy_a \leq u_a, \ \forall a\in \gA\}$, where $u_a \in \gR$ represents the maximum permissible capacity increase on link $a$, for instance, due to geographical or engineering constraints. If $u_a = 0$, it means the link cannot be expanded. Let $t(\vy,\vv)$ denote the link travel time function --- which is determined by the link flow $\vv$ and affected by the added capacity $\vy$ --- and $G(\vy)$ the expansion cost function.  Throughout this study, we assume that given any $\vy \in \gY$, travelers' route choices are predicted by user equilibrium (UE), i.e., no traveler can unilaterally switch to a route with lower cost \citep{wardrop1952road}. By \citet{dafermos1980traffic}, a link flow pattern $\vv \in \gV$ satisfies the UE condition if and only if the following variational inequality (VI) holds:
\begin{equation}\label{vip}
\langle t(\vy,\vv),~ \vv' - \vv \rangle \geq 0, \quad \forall~\vv' \in \gV.
\end{equation}
For notational simplicity, we denote the solution set to this VI by $\gV^*(\vy)$.

Under the above setting, it is often assumed that the designer aims to minimize
\begin{eqnarray*}
\begin{array}{rl}
F(\vy,\vv) := \langle t(\vy,\vv), \vv \rangle + \eta \cdot G(\vy)
\end{array}
\end{eqnarray*}
which is a weighted sum of the total travel time experienced by travelers at UE and the expansion cost, with an appropriate $\eta > 0$ as the weighting coefficient \citep{abdulaal1979continuous}.
If the feasible region for $\vy$ is set as $\gY$, however, there would be a practical challenge: the resulting solution may suggest adding capacity to every link $a$ that can be expanded (i.e., those satisfying $u_a > 0$), leading to an expansion plan far from feasible in practice. A commonly adopted strategy to address this issue is to pre-select a \textit{small} subset of links, $\hat{\gA} \subseteq \gA$, for potential expansion. This leads to the following BCEP formulation frequently used in practice \citep{suwansirikul1987equilibrium, friesz1992simulated, chow2010faster, guo2025penalized}:
\begin{eqnarray}\label{cndp-A}
\begin{array}{rl}
\displaystyle{\rm BCEP(\hat{\gA})}~~~~~~\min_{\vy, \vv} & ~F(\vy,\vv),\\ [5pt]
{\rm s.t.} & ~\vy\in \gY, \quad \vv \in \gV^*(\vy), \\ 
&~\evy_a=0, \quad \forall  a \in \gA \setminus \hat{\gA}.\\ 
\end{array}
\end{eqnarray}
However, there are few widely recognized criteria for selecting such links beyond heuristic rules. A relatively reasonable heuristic is to rank links by the marginal decrease in the objective per unit of expansion on each link $a$. This requires additional assumptions to ensure that the UE link flow pattern is unique and that the equilibrium mapping $v^*: \vy \mapsto \gV^*(\vy)$ is continuously differentiable \citep[see, e.g.,][for such regularities]{patriksson2004sensitivity, yang2005mathematical}. Further defining $F^*(\vy) = F(\vy, v^*(\vy))$, the marginal effect at the status quo can then be computed as
\begin{align}
    e_a = -\nabla_{\evy_a} F^*(\vy) \mid_{\vy = \vzero}.
    \label{eq:sensitivity}
\end{align}
Note that a negative $e_a$ implies that investing in link $a$ would actually increase total travel time, reflecting the Braess paradox \citep{braess1968paradoxon}. Such links should generally be excluded from the expansion plan.  On the contrary, a large positive $e_a$ suggests that a small expansion of link $a$ could lead to a relatively large reduction in the objective function. A network designer might therefore select a small subset of links with high $e_a$ for expansion. However, this approach may lead to suboptimal decisions, especially in large and complex networks, as it relies solely on local sensitivity information and ignores interactions among links. This challenge motivates us to develop the proposed model, which shall be introduced in the next section.

\section{Our Model} \label{sec:newmodel}

In this paper, we propose to simultaneously determine the links to expand and their corresponding expansion levels in planning capacity enhancement. Specifically, instead of pre-selecting a subset of links $\gA$ with a small cardinality $\tau = |\supp(\gA)|$, we hope to search over all possible expansion plans that add capacity to at most $\tau$ links, namely
\[
\Upsilon_\tau=\{\vy\in \gY: |\supp(\vy)|\leq \tau\}.
\]
By integrating this cardinality constraint set with the classic formulation, we propose the following cardinality-constrained BCEP (CCBCEP):
\begin{eqnarray}\label{p}
\begin{array}{rl}
\displaystyle{\text{CCBCEP}}~~~~~~\min_{\vy,\vv} & ~F(\vy,\vv), \\ 
{\rm s.t.} & ~\vy\in \Upsilon_\tau,\quad \vv \in \gV^*(\vy).
\end{array}
\end{eqnarray}
In contrast to $\text{BCEP}(\hat{\mathcal{A}})$, our new model relaxes the assumption of pre-determined expansion locations and instead allows the locations to be selected optimally. Plainly, its optimal solution can be interpreted as the most effective selection of up to $\tau$ links to maximize congestion relief while considering the monetary cost.

Despite its practical appeal, CCBCEP is difficult to solve because it involves two challenging constraints: the equilibrium condition $\vv \in \gV^{*}(\vy)$ and the cardinality restriction $\vy \in \Upsilon_{\tau}$. The equilibrium constraint alone already makes the problem strongly NP-hard \citep{gairing2017complexity}. The cardinality constraint is also formidable, even when one attempts to handle it through penalization, because it would introduce the term
$$|\supp(\vy)|= \sum_{a \in \gA} \mathbf{1}_{\evy_a > 0},$$
into the objective function, which jumps discontinuously whenever a component $\evy_a$ crosses zero.

In the remainder of the section, we will first establish a reformulation of CCBCEP \eqref{p} (Section~\ref{sec:reformulation}) and then analyze some key properties of the reformulated problem (Section~\ref{sec:property}).

\subsection{Equivalent reformulation}\label{sec:reformulation}

As a first step toward tackling CCBCEP \eqref{p}, we reformulate the problem under a set of standard assumptions that are commonly used in BCEP studies \citep[e.g.,][]{leblanc1975algorithm,abdulaal1979continuous,marcotte1986network,meng2001equivalent,wang2013} 

\begin{assumption}
    \label{ass:1}
    The link cost function $t(\vv;\vy) = (t_a(\evv_a;\evy_a))_{a \in \gA}$, and each $t_a(\evy_a,\evv_a)$ is continuously differentiable in $(\vv, \vy)$, and strictly increasing in $\evv_a$ for all $\evy_a \geq 0$.
\end{assumption}

\begin{assumption}
    \label{ass:2}
    The expansion cost function $G(\vy) = \sum_{a \in \gA} G_a(\evy_a)$, and each $G_a(\evy_a)$ is nonnegative, continuously differentiable and strictly increasing in $\evy_a$.
\end{assumption}

Among others, Assumption \ref{ass:1} ensures that for any $\vy \in \gY$, the VI problem \eqref{vip} admits a unique solution \citep{dafermos1980traffic}, and can be reformulated via \citet{beckmann1956studies}'s transformation, which suggests that any $\vv \in \gV$ is the solution if and only if it solves the following convex program
\begin{equation}\label{ue}
\begin{array}{rl}
\displaystyle \vv = \argmin_{\vv' \in \gV} & \displaystyle~f(\vv'; \vy) := \sum_{a\in {\gA}}\int_0^{\evv_a'} t_a(\evy_a,w) \dif w.
\end{array}
\end{equation}
Denote the value and gap function for this convex program as
\begin{equation*}
    g(\vy) := \inf\{f(\vy,\vv): \vv\in \gV\} \quad \text{and} \quad \varphi(\vy,\vv) := f(\vv;\vy)-g(\vy),
\end{equation*}
respectively.
It is then easy to see for all $\vy \in \gY$ and $\vv \in \gV$, the gap function $\varphi(\vy,\vv)$ is always nonnegative, and would be nonpositive if and only if $\vv \in \gV^*(\vy)$. Accordingly, CCBCEP \eqref{p} can be transformed into the following \textit{equivalent} problem:
\begin{eqnarray}\label{equi-p}
\begin{array}{rl}
\displaystyle{\text{CCBCEP-E}}~~~~~~\displaystyle \min_{\vy, \vv} & ~~F(\vy,\vv), \\ 
{\rm s.t.} & ~~\vy\in \Upsilon_\tau,\quad \vv\in \gV,\\
           & ~~\varphi(\vy,\vv) \leq 0.
\end{array}
\end{eqnarray}
Although the reformulation admits a single-level structure, it still poses challenges on three fronts. First, the cardinality constraint set $\vy\in\bm{\Upsilon}_\tau$ is a union of polyhedral sets (see Proposition \ref{prop-cardiset}) and thus introduces combinatorial complexity. 
Second, evaluating the constraint $\varphi(\vv;\vy)\leq 0$  requires computing the value function $g(\vy)$, which involves solving a UE problem. With such a constraint, it is not easy to develop scalable algorithms. Moreover, at any feasible point of CCBCEP-E~\eqref{equi-p}, the gap function $\varphi(\vv;\vy)$ must be exactly zero, since by construction $\varphi(\vv;\vy)\ge 0$ for all $(\vy,\vv)\in \gY\times \gV$ and feasibility imposes $\varphi(\vv;\vy)\le 0$. Hence, the strict inequality $\varphi(\vv;\vy)<0$ never holds at a feasible point. As a result, the Mangasarian–Fromovitz Constraint Qualification (MFCQ) fails everywhere for CCBCEP-E~\eqref{equi-p}, because there is no feasible direction that further decreases $\varphi$ at the boundary. Accordingly, standard nonlinear programming algorithms that rely on MFCQ (or similar qualifications) are not directly applicable.

\subsection{Properties}\label{sec:property}

We now proceed to investigate several key properties of CCBCEP-E~\eqref{equi-p} that will facilitate both algorithm design and convergence analysis.
First, observe that without the cardinality constraint, CCBCEP-E~\eqref{equi-p} reduces to the value function-based formulation of the classic BCEP, a case recently studied by \citet{guo2025penalized}, who established the following result (see their Proposition 2).
\begin{proposition}\label{prop:gradient}
Under Assumptions \ref{ass:1} and \ref{ass:2}, it holds that (i) both $F(\vy, \vv)$ and $f(\vy,\vv)$ are continuously differentiable; (ii) $g(\vy)$ is also continuously differentiable, and its gradient reads
    \begin{equation}\label{gra-formula}
    \nabla g(\vy) = \left(\int_0^{v_a} \nabla_{\evy_a} t_a(\evy_a,w) \dif w\right)_{a\in \gA},
    \end{equation}
    where $(\evv_a)_{a\in \gA}$ is the unique solution to Problem \eqref{ue} (i.e., the unique link flow pattern at UE).
\end{proposition}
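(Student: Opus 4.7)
The plan is to handle part~(i) by direct differentiation under the integral and part~(ii) by an envelope/Danskin argument applied to the parametric strictly convex program that defines $g$.

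For part~(i), I would verify the differentiability of $F(\vy,\vv)=\langle t(\vy,\vv),\vv\rangle+\eta\,G(\vy)$ directly: each $t_a(\evy_a,\evv_a)$ is $C^1$ by Assumption~\ref{ass:1}, so the inner product is $C^1$, and $G$ is $C^1$ by Assumption~\ref{ass:2}. For $f(\vv;\vy)=\sum_{a\in\gA}\int_0^{\evv_a} t_a(\evy_a,w)\,\dif w$, I would apply the fundamental theorem of calculus to get $\partial f/\partial\evv_a = t_a(\evy_a,\evv_a)$, and Leibniz's rule (justified because $\nabla_{\evy_a} t_a$ is continuous on a compact range of $w$) to obtain $\partial f/\partial\evy_a = \int_0^{\evv_a}\nabla_{\evy_a}t_a(\evy_a,w)\,\dif w$. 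Joint continuity of these partials in $(\vy,\vv)$ then delivers $f\in C^1$.

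For part~(ii), the plan is to verify three hypotheses that trigger an envelope theorem and then read off the formula. First, I would observe that $\gV$ is a nonempty compact convex polytope, being the image under $\mDelta$ of the bounded polytope $\gH=\{\vh\ge\vzero:\mLambda\vh=\vd\}$. Second, for each fixed $\vy\in\gY$ the objective $f(\cdot;\vy)$ is strictly convex in $\vv$, since $\partial^2 f/\partial \evv_a^2=\partial t_a/\partial \evv_a>0$ by Assumption~\ref{ass:1}, and hence the minimizer $\vv^*(\vy)\in\gV$ is unique. Third, $f$ and $\nabla_{\vy} f$ are jointly continuous by part~(i). Uniqueness combined with Berge's maximum theorem would then upgrade the solution mapping $\vy\mapsto\vv^*(\vy)$ from upper semicontinuous to single-valued and continuous.

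Once these hypotheses are in place, the envelope identity $\nabla g(\vy)=\nabla_{\vy} f(\vv^*(\vy);\vy)$ would yield exactly the claimed formula, and continuity of $\nabla g$ would follow by composing the continuous maps $\vy\mapsto\vv^*(\vy)$ and $(\vy,\vv)\mapsto\nabla_{\vy} f(\vv;\vy)$. The main obstacle I anticipate is justifying the envelope step rigorously rather than invoking a black-box theorem. I would handle it by a sandwich argument: the inequalities
\[
f(\vv^*(\vy+t\vd);\vy+t\vd)-f(\vv^*(\vy+t\vd);\vy)\ \le\ g(\vy+t\vd)-g(\vy)\ \le\ f(\vv^*(\vy);\vy+t\vd)-f(\vv^*(\vy);\vy)
\]
hold for $t>0$ by the definition of $g$ and feasibility of $\vv^*(\vy)$ at parameter $\vy+t\vd$ (and vice versa). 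Dividing by $t$ and letting $t\to 0$, continuity of $\vv^*$ at $\vy$ together with uniform continuity of $\nabla_{\vy} f$ on a compact neighborhood would force both bounds to converge to $\langle\nabla_{\vy} f(\vv^*(\vy);\vy),\vd\rangle$, thereby establishing differentiability and the gradient formula simultaneously.
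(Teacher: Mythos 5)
Your argument is correct, but it is worth noting that the paper does not prove Proposition~\ref{prop:gradient} at all: it imports the result wholesale from \citet{guo2025penalized} (their Proposition~2), so there is no in-paper proof to match against. Your self-contained route is the standard one for such marginal-function results and holds up. Part~(i) is routine under Assumptions~\ref{ass:1} and~\ref{ass:2}. For part~(ii), the three hypotheses you isolate are exactly what is needed: $\gV$ is a compact polytope independent of $\vy$, the minimizer is unique, and $\nabla_{\vy} f$ is jointly continuous; uniqueness plus Berge then gives continuity of $\vy \mapsto \vv^*(\vy)$, and your two-sided sandwich (which is just Danskin's theorem unpacked) correctly pins both difference quotients to $\langle \nabla_{\vy} f(\vv^*(\vy);\vy), \vd\rangle$, with linearity in $\vd$ and continuity of the candidate gradient upgrading Gateaux to continuous Fr\'echet differentiability. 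One small imprecision: you justify uniqueness of the minimizer via $\partial^2 f/\partial \evv_a^2 = \partial t_a/\partial \evv_a > 0$, but ``strictly increasing and $C^1$'' does not force a strictly positive derivative everywhere (consider $w \mapsto w^3$ at the origin). The fix is immediate --- the antiderivative of a strictly increasing function is strictly convex regardless --- so the conclusion stands; just argue strict convexity directly rather than through the second derivative. The practical trade-off between the two approaches is simply that the paper's citation keeps the exposition short, while your argument makes the result verifiable without consulting the external reference and makes explicit where each assumption is used.
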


Proposition \ref{prop:gradient} indicates that to compute $\nabla g(\vy)$, it suffices to first obtain the unique $\vv \in \gV^*(\vy)$ using any standard UE algorithm, and then evaluate the gradient according to Equation~\eqref{gra-formula}.

We next check the geometry of the cardinality constraint set $\Upsilon_\tau$, which is clearly non-convex unless $\tau = |\gA|$: for any two points in $\Upsilon_\tau$ with distinct supports, their convex combination generally has more than $\tau$ positive components. Yet, as established in the following proposition, $\Upsilon_\tau$ can be represented as a union of finitely many polyhedral convex sets.

\begin{proposition}\label{prop-cardiset}
The cardinality constraint set $\Upsilon_\tau$ can be expressed as the union of finitely many polyhedral convex sets, specifically,
\begin{equation}\label{prop:union}
\Upsilon_\tau = \bigcup_{\gI\in \Xi_\tau} \left\{\vy\in \gY: \evy_a=0, \ \ a\in \gA\setminus \gI\right\}, \quad \text{where}~\Xi_\tau = \{\gI \subset \gA : |\gI| = \tau\}.
\end{equation}
\end{proposition}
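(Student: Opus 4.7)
The plan is to prove the set identity by double inclusion and then observe that each member of the union is clearly a polyhedron (intersection of $\gY$ with finitely many linear equalities $\evy_a = 0$), which makes $\Upsilon_\tau$ a finite union of polyhedral convex sets. Since $\Xi_\tau$ has cardinality $\binom{|\gA|}{\tau}$, the union is finite.

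For the inclusion ``$\supseteq$'', I would take any $\vy$ in the right-hand side. Then there exists $\gI \in \Xi_\tau$ such that $\vy \in \gY$ and $\evy_a = 0$ for every $a \in \gA \setminus \gI$. This forces $\supp(\vy) \subseteq \gI$, so $|\supp(\vy)| \leq |\gI| = \tau$, which gives $\vy \in \Upsilon_\tau$.

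For the converse inclusion ``$\subseteq$'', I would take $\vy \in \Upsilon_\tau$, so $\vy \in \gY$ and $|\supp(\vy)| \leq \tau$. Assuming the non-degenerate case $\tau \leq |\gA|$ (otherwise the constraint is vacuous and the statement is trivial), I can enlarge $\supp(\vy)$ to a set $\gI \subseteq \gA$ of cardinality exactly $\tau$ by adjoining any $\tau - |\supp(\vy)|$ elements of $\gA \setminus \supp(\vy)$. Then $\gI \in \Xi_\tau$, and for every $a \in \gA \setminus \gI \subseteq \gA \setminus \supp(\vy)$ we have $\evy_a = 0$, so $\vy$ lies in the set indexed by $\gI$ in the union.

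There is no real obstacle here; the only subtlety worth spelling out is the superset construction in the ``$\subseteq$'' direction when $|\supp(\vy)| < \tau$, which is why the identity uses sets of size \emph{exactly} $\tau$ rather than \emph{at most} $\tau$. Once the two inclusions are in hand, polyhedrality of each component set is immediate from the fact that it is defined by the linear inequalities of $\gY$ (namely $0 \leq \evy_a \leq u_a$) together with the linear equalities $\evy_a = 0$ for $a \in \gA \setminus \gI$, completing the proof.
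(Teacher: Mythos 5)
Your proof is correct and follows essentially the same double-inclusion argument as the paper, including the key step of enlarging $\supp(\vy)$ to a set of cardinality exactly $\tau$ in the ``$\subseteq$'' direction. The additional remark on the polyhedrality of each component set is a welcome (if routine) elaboration that the paper leaves implicit.
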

\begin{proof}
See Appendix~\ref{app:prop-cardiset} for the proof.
\end{proof}

Proposition~\ref{prop-cardiset} ensures that $\Upsilon_\tau$ is a closed set, so any accumulation point of a sequence within $\Upsilon_\tau$ also belongs to $\Upsilon_\tau$, a property that is crucial for the convergence analysis of algorithms.

Taken together, the two propositions imply that CCBCEP-E~\eqref{equi-p} is a continuously differentiable program defined over a closed set. To further leverage the convexity and concavity properties of the problem, which can often substantially improve algorithmic efficiency \citep{boyd2004convex}, we impose the following additional assumption.

\begin{assumption}\label{assu-convex} For each $a\in \gA$, (i)$\int_0^{\evv_a} t_a(\evy_a,w) \dif w$ is convex in $(\evy_a,\evv_a)$; (ii)  $t_a(\evy_a,\evv_a) \cdot 
\evv_a$ is convex in $(\evy_a, \evv_a)$; (iii) $G_a(\evy_a)$ is convex in $\evy_a$.
\end{assumption}

It can be checked that Assumption \ref{assu-convex}-(i) and -(ii) holds as long as the travel time cost function follows the Bureau of Public Roads (BPR) form, i.e.
\begin{equation}\label{bpr}
t_a(\evy_a,\evv_a) = t_{a,0}\cdot \left(1 + 0.15\cdot\left(\frac{\evv_a}{\evc_a + \evy_a}\right)^4\right),\quad \forall a\in \gA,
\end{equation}
where $t_{a,0}$ is the free-flow travel time and $c_a$ is the original capacity of link $a \in \gA$. Meanwhile, Assumption~\ref{assu-convex}-(iii), which requires the convexity of $G_a(y_a)$, simply means that the marginal cost of capacity expansion is non-decreasing --- i.e., each additional unit of capacity becomes progressively more expensive. This reflects typical real-world scenarios. Under these assumptions, the following properties hold \citep[Proposition 3]{guo2025penalized}.

\begin{proposition}\label{prop:convex}
Under Assumption~\ref{assu-convex},
(i) $F(\vy, \vv)$ is convex with respect to $(\vy, \vv)$;
(ii) $\varphi(\vy,\vv) = f(\vy,\vv) - g(\vy)$ admits a difference-of-convex (DC) structure in the sense that $f(\vy,\vv)$ is convex in $(\vv, \vy)$ and $g(\vy)$ is convex in $\vy$.
\end{proposition}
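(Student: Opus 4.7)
The plan is to exploit the link-wise separable structure of $F$, $f$, and $G$ to reduce both assertions to the per-link convexity hypotheses in Assumption~\ref{assu-convex}, together with one standard fact about convexity-preserving partial minimization. For part (i), I would write
\[
F(\vy,\vv) = \sum_{a\in \gA} t_a(\evy_a,\evv_a)\cdot \evv_a + \eta\sum_{a\in \gA} G_a(\evy_a).
\]
Each summand $t_a(\evy_a,\evv_a)\cdot \evv_a$ is convex in the scalar pair $(\evy_a,\evv_a)$ by Assumption~\ref{assu-convex}-(ii); regarded as a function on $\sR^{|\gA|}\times \sR^{|\gA|}$ that depends only on two coordinates, it is jointly convex in $(\vy,\vv)$. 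Similarly, each $G_a(\evy_a)$ is convex by Assumption~\ref{assu-convex}-(iii). Since $\eta > 0$ and nonnegative linear combinations preserve convexity, $F$ is convex in $(\vy,\vv)$.

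For part (ii), the convexity of $f(\vy,\vv)$ follows by exactly the same template: $f$ is the link-wise sum of the integrals $\int_0^{\evv_a} t_a(\evy_a, w)\dif w$, each of which is convex in $(\evy_a,\evv_a)$ by Assumption~\ref{assu-convex}-(i). To establish convexity of $g(\vy) = \inf_{\vv\in \gV} f(\vy,\vv)$ in $\vy$, I would invoke the standard convex-analytic fact that if $(\vy,\vv)\mapsto f(\vy,\vv)$ is jointly convex and $\gV$ is a convex set that does not depend on $\vy$, then the partial minimum value function is convex in $\vy$. Convexity of $\gV$ is immediate from its definition as the image of the polyhedron $\gH$ under the linear link--route incidence map $\vh\mapsto \mDelta\vh$, and Assumption~\ref{ass:1} guarantees that the infimum is attained at the unique solution $\vv\in \gV^*(\vy)$ for every $\vy\in \gY$, so no pathology from a non-attained infimum arises.

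The only step requiring any care is the bookkeeping inside the partial-minimization argument: for arbitrary $\vy^1,\vy^2\in \gY$ and $\lambda\in[0,1]$, one picks minimizers $\vv^i\in \gV^*(\vy^i)$, observes that $\lambda\vv^1+(1-\lambda)\vv^2\in \gV$ by convexity of $\gV$, and combines this with joint convexity of $f$ to obtain
\[
g\bigl(\lambda\vy^1+(1-\lambda)\vy^2\bigr) \le f\bigl(\lambda(\vy^1,\vv^1)+(1-\lambda)(\vy^2,\vv^2)\bigr) \le \lambda g(\vy^1)+(1-\lambda)g(\vy^2).
\]
Once this inequality is in place, the DC decomposition $\varphi(\vy,\vv) = f(\vy,\vv) - g(\vy)$ as the difference of a jointly convex function and a function convex in $\vy$ is immediate, completing part (ii).
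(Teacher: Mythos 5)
Your proof is correct: link-wise separability reduces the convexity of $F$ and $f$ to the per-link hypotheses of Assumption~\ref{assu-convex}, and the convexity of $g$ follows from the standard partial-minimization argument over the convex, $\vy$-independent set $\gV$, which you carry out correctly. The paper itself does not reproduce a proof—it cites Proposition~3 of \citet{guo2025penalized}—but your argument is the standard route that result rests on, so the approaches are essentially the same.
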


The DC structure of $\varphi(\vy,\vv)$ is a key to our algorithm to be developed in the next section.

\section{Solution Algorithm}\label{sec:Algo}

We are now ready to develop an algorithm for solving CCBCEP-E~\eqref{equi-p}. While optimal solutions are desirable in theory, they are computationally intractable for nonlinear and non-convex problems of this form on large-scale networks \citep{nocedal1999numerical}. In this section, we present an algorithm capable of computing $\epsilon$-approximate KKT stationary solutions (shall be formally defined later) for arbitrarily small $\epsilon$. We begin by outlining the general idea behind the algorithm (Section~\ref{sec:idea}), followed by implementation details and a convergence analysis (Section~\ref{sec:implementation}).

\subsection{Idea}
\label{sec:idea}

As the first step,  we propose the following partially penalized approximation (PPA) of CCBCEP-E~\eqref{equi-p} to overcome the computational difficulties arising from the failure of MFCQ in CCBCEP-E~\eqref{equi-p}, in which the constraint $\varphi(\vy,\vv) \leq 0$ is incorporated into the objective function as a penalty term:
\begin{eqnarray}\label{penalty_p}
\begin{array}{rl}
\text{PPA}_\rho~~~~~~\displaystyle \min_{\vy,\vv} & ~~F(\vy,\vv) +\rho \cdot \varphi(\vy,\vv), \\ 
{\rm s.t.} & ~~\vy\in \Upsilon_\tau,\quad \vv\in \gV,
\end{array}
\end{eqnarray}
where $\rho > 0$ is a penalty parameter. An advantage of PPA$_{\rho}$~\eqref{penalty_p} is that its feasible region consists of two separable constraint sets, $\Upsilon_\tau$ and $\gV$, which potentially enables the development of efficient decomposition algorithms.  Yet, it should be noted that the solution to PPA$_{\rho}$~\eqref{penalty_p} may not satisfy the equilibrium constraint $\vv \in \gV^*(\vy)$ exactly, as it is a penalty-based relaxation. Later, we will discuss how to appropriately choose and adjust the penalty parameter $\rho$ so that the solution to PPA$_{\rho}$~\eqref{penalty_p} gradually becomes feasible and optimal for the original CCBCEP.

We then present the following key result, which shows that minimizing a link-separable function in $\vy$ over the cardinality-constrained set $\Upsilon_\tau$ reduces to solving a set of tractable one-dimensional optimization problems.

\begin{proposition}\label{prop:closedsolution}
Given a class of functions $\{\phi_a: \sR \to \sR\}_{a \in \gA}$, consider the following optimization problem
\begin{equation} \label{prop-opt}
\min_{\vy} \sum_{a\in \gA} \phi_a(y_a),\quad  {\rm s.t.}\ \  \vy\in \Upsilon_\tau.
\end{equation}
For each $a \in \gA$, suppose that $y_a^*$ is an optimal solution of the following one-dimensional optimization problem
\[
\min\ \phi_a(y_a), \quad {\rm s.t.}\ 0\leq y_a \leq u_a.
\]
Let $\bar{\gA}$ be the index set of the $\tau$ smallest values of $\{\phi_a(y_a^*)-\phi_a(0):a\in \gA\}$.
Then Problem \eqref{prop-opt} has the following closed-form solution $\bar{\vy}^* = (\bar{\evy}_a^*)_{a\in\gA}$, where
\[
\bar{\evy}_a^* = 
\begin{cases}
    y_a^*, \quad &\text{if}~a\in \bar{\gA}, \\
    0, \quad &\text{otherwise}.  
\end{cases}
\]
\end{proposition}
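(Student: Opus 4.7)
The plan is to combine the finite-union representation of $\Upsilon_\tau$ from Proposition~\ref{prop-cardiset} with the link-separability of the objective in order to reduce the cardinality-constrained problem to a pair of nested problems: a finite combinatorial selection of the support $\gI$, and, for each fixed $\gI$, a collection of decoupled one-dimensional minimizations. First I would rewrite
\begin{equation*}
\min_{\vy \in \Upsilon_\tau} \sum_{a\in \gA}\phi_a(\evy_a) \;=\; \min_{\gI\in \Xi_\tau}\ \min_{\vy\in \gY_\gI}\ \sum_{a\in \gA}\phi_a(\evy_a),
\end{equation*}
where $\gY_\gI := \{\vy\in \gY:\evy_a = 0,\ \forall a\notin \gI\}$ and $\Xi_\tau$ is as defined in Proposition~\ref{prop-cardiset}.

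Next, for a fixed $\gI\in\Xi_\tau$, I would observe that the inner problem is fully separable across $a\in \gA$, since the constraint set $\gY_\gI$ decouples coordinate-wise: for $a\notin \gI$ the variable is pinned to $0$, and for $a\in \gI$ the variable ranges independently over $[0,u_a]$. The inner minimizer therefore assigns $\evy_a = y_a^*$ for $a\in \gI$ (by the very definition of $y_a^*$ in the proposition's hypothesis) and $\evy_a = 0$ for $a\notin \gI$, with optimal value
\begin{equation*}
V(\gI)\;=\;\sum_{a\in \gA}\phi_a(0)\;+\;\sum_{a\in \gI}\bigl[\phi_a(y_a^*)-\phi_a(0)\bigr].
\end{equation*}

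Third, I would minimize $V(\gI)$ over $\gI\in\Xi_\tau$. Since the constant $\sum_{a\in \gA}\phi_a(0)$ is independent of $\gI$ and the cardinality $|\gI|=\tau$ is fixed, the minimum is attained by selecting $\gI$ to be the index set of the $\tau$ smallest values of the scores $\phi_a(y_a^*)-\phi_a(0)$, which is precisely $\bar{\gA}$. Substituting $\gI=\bar{\gA}$ into the inner solution yields the closed-form vector $\bar{\vy}^*$ in the proposition statement.

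The main obstacle, and really the only subtle point, is to ensure that restricting the outer minimization to subsets of cardinality \emph{exactly} $\tau$ (rather than at most $\tau$) is without loss of optimality, which is exactly the content of the decomposition in Proposition~\ref{prop-cardiset}; intuitively, since $\evy_a=0$ is always feasible for each one-dimensional subproblem, we have $\phi_a(y_a^*)\leq \phi_a(0)$, so enlarging the support to size $\tau$ can never hurt. Beyond this bookkeeping, the argument is essentially mechanical, and no smoothness, convexity, or coercivity of the $\phi_a$ is required as long as the one-dimensional problems admit minimizers $y_a^*$.
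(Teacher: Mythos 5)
Your proof is correct, but it takes a genuinely different route from the paper's. You exploit the union decomposition of $\Upsilon_\tau$ from Proposition~\ref{prop-cardiset} to turn the problem into a nested minimization: first over the support $\gI\in\Xi_\tau$, then a fully separable inner problem whose value $V(\gI)$ differs across supports only through the sum of the scores $\phi_a(y_a^*)-\phi_a(0)$, so the outer problem collapses to a top-$\tau$ selection. The paper instead argues directly: it takes an arbitrary $\vy\in\Upsilon_\tau$ with support $\gI$, partitions $\gA$ into the four blocks $\bar{\gA}\cap\gI$, $\bar{\gA}\cap\gI^c$, $\bar{\gA}^c\cap\gI$, $\bar{\gA}^c\cap\gI^c$, and shows $\sum_a\phi_a(\bar{\evy}_a^*)\le\sum_a\phi_a(\evy_a)$ blockwise, handling the two cross blocks by a pairing/exchange argument that uses $|\bar{\gA}\cap\gI^c|\ge|\bar{\gA}^c\cap\gI|$ together with the definition of $\bar{\gA}$. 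Your decomposition is more modular and makes the ``enumerate supports, solve separably, pick the best support'' structure transparent, which is arguably closer to how one would discover the result; the paper's exchange argument is self-contained and verifies optimality without passing through the explicit enumeration of all $\binom{|\gA|}{\tau}$ supports as an intermediate object. Your remark on the only subtle point is also handled correctly: restricting to supports of cardinality exactly $\tau$ is without loss because Proposition~\ref{prop-cardiset} already covers every point of $\Upsilon_\tau$ by some exactly-$\tau$ support set, and in any case $\phi_a(y_a^*)\le\phi_a(0)$ means enlarging the support never increases the value.
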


\begin{proof}
    See Appendix \ref{app:closedsolution} for the proof.
\end{proof}

Importantly, Proposition~\ref{prop:closedsolution} motivates us to reformulate PPA$_{\rho}$~\eqref{penalty_p} so that its objective becomes link-separable in $\vy$. Recall that the objective consists of two parts: $F(\vy, \vv)$, which is convex and link-separable in $\vy$, and $\varphi(\vy,\vv)$, which admits a DC structure by Proposition~\ref{prop:convex}. To further facilitate algorithmic handling, we propose decomposing $\varphi(\vy,\vv)$ as the difference of two strongly convex functions by introducing a regularization term $\beta \cdot \|(\vy, \vv)\|_2^2$, which leads to
\begin{equation}\label{decom}
\varphi(\vy,\vv) = \big[f(\vy,\vv) + \beta \cdot \|(\vy, \vv)\|_2^2 \big] - \big[ g(\vy) + \beta \cdot \|(\vy, \vv)\|_2^2 \big],
\end{equation}
where $\beta > 0$ is a regularization parameter --- how to choose and adjust $\beta$ would be discussed later.
Among all components of the objective in PPA$_{\rho}$~\eqref{penalty_p}, one may then notice that only $g(\vy) + \beta \cdot \|(\vy, \vv)\|_2^2$ is \textit{not} link-separable in $y$. However, this can be addressed by linearization. Specifically, supposing that the current solution is $(\vy^k, \vv^k)$, linearizing $g(\vy) + \beta \cdot \|(\vy, \vv)\|_2^2$ in Equation~\eqref{decom}, which is convex with respect to $(\vy, \vv)$, yields a strongly convex upper approximation for the gap function $\varphi(\vy,\vv)$:
\begin{equation}\label{convex-appro}
\begin{split}
\varphi(\vy,\vv) &\leq \Phi(\vy, \vv; \vy^k,\vv^k) + \beta \cdot \|(\vy-\vy^k, \vv-\vv^k)\|_2^2,
\end{split}
\end{equation}
where, for notational simplicity,
$$
\Phi(\vy, \vv; \vy^k,\vv^k) := f(\vy,\vv) -[g(\vy^k) + \nabla g(\vy^k)^{\rm T} \cdot (\vy-\vy^k)].
$$
Note that as Equation~\eqref{convex-appro} holds for all $\beta \geq 0$, we also have
\begin{equation}
    \varphi(\vy,\vv) \leq \Phi(\vy, \vv; \vy^k, \vv^k).
    \label{eq:phi-Phi}
\end{equation}

Suppose that the penalization and the regularization parameters are now specified as  $\rho^k$ and $\beta^k$, respectively.
Based on Equation~\eqref{convex-appro}, we propose the following linearized approximation for PPA$_{\rho}$~\eqref{penalty_p}, i.e., a partially penalized and linearized approximation (PPLA) for the original CCBCEP-E~\eqref{equi-p}, at the current solution $(\vy^k, \vv^k)$:
\begin{eqnarray}\label{approP}
\text{PPLA}_k ~~~~~~ 
\begin{array}{rl}
\displaystyle \min_{\vy, \vv} & ~~\Psi^k(\vy, \vv) := F(\vy, \vv) + \rho^k \cdot \Phi(\vy, \vv; \vy^k,\vv^k) + \rho^k \cdot \beta^k \cdot \|(\vy-\vy^k, \vv-\vv^k)\|_2^2, \\ 
{\rm s.t.} & ~~\vy\in \Upsilon_\tau,\quad \vv\in \gV.
\end{array}
\end{eqnarray}
Although the non-convex constraint $\vy \in \Upsilon_\tau$ remains in PPLA$_{k}$~\eqref{approP}, a key observation is that its objective is not only strongly convex but also is link-separable in $\vy$, which implies that $\Psi^k(\vy, \vv)$ is well-suited to an alternating (block coordinate) minimization approach. Specifically, one can alternately optimize over $\vv \in \gV$ with $\vy$ fixed --- which corresponds to solving a standard traffic assignment problem with link-specific parameters \citep{beckmann1956studies} --- and over $\vy \in \gY$ with $\vv$ fixed. The latter, thanks to Proposition~\ref{prop:closedsolution}, reduces to a set of tractable one-dimensional problems.

Thus far, we have outlined the general idea behind our algorithmic approach. In the next section, we provide detailed steps for solving PPLA$_{k}$~\eqref{approP} --- the partially penalized and linearized approximation of CCBCEP-E~\eqref{equi-p} --- and explore how its solutions can be leveraged to solve CCBCEP-E~\eqref{equi-p} itself, which is an equivalent formulation of the original CCBCEP~\eqref{p}.

\subsection{Implementation and convergence analysis}
\label{sec:implementation}

\subsubsection{Algorithm for \texorpdfstring{PPLA$_{k}$}{PPLA}}
\label{sec:approP}

Based on the above discussion, we propose Algorithm~\ref{alg:ama} for solving PPLA$_{k}$~\eqref{approP}. Starting from an appropriate initial point $\vy^{k,0} \in \Upsilon_\tau$ ($\vy^k$ itself is a suitable choice), the algorithm alternately optimizes over $\vv$ and $\vy$, keeping the other variable fixed. Since PPLA$_{k}$~\eqref{approP} is non-convex, we expect that the algorithm will, in general, return a partially optimal solution --- a stronger condition than stationarity --- which we use as the termination criterion. The formal definitions of these two notions are provided below.
\begin{definition}\label{defi1}
(i) A point $(\bar{\vy},\bar{\vv})\in \Upsilon_\tau\times \gV$ is said to be partially optimal for Problem $\text{PPLA}_k$ if 
\begin{equation*}
    \begin{cases}
    \Psi^k(\bar{\vy},\bar{\vv}) \leq \Psi^k(\bar{\vy},\vv),\quad \forall \vv\in \gV,\\
    \Psi^k(\bar{\vy},\bar{\vv}) \leq \Psi^k(\vy,\bar{\vv}),\quad \forall \vy\in \Upsilon_\tau.
    \end{cases}
\end{equation*}

(ii) A point $(\bar{\vy},\bar{\vv})\in \Upsilon_\tau\times \gV$ is said to be stationary for Problem $\text{PPLA}_k$ if
\[\vzero \in \nabla \Psi^k(\bar{\vy},\bar{\vv}) + \cN_{\Upsilon_\tau}(\bar{\vy})\times \cN_\gV(\bar{\vv}).
\]
\end{definition}
The reader is referred to \cite[Theorem 10.1]{rockafellar2009} for an explanation of why partial optimality implies stationarity.

\begin{algorithm}[ht]
\caption{Alternating minimization algorithm for solving Problem $\text{PPLA}_k$}\label{alg:ama}
\vskip6pt
\begin{algorithmic}
\Procedure{AMA}{$\rho^k,\beta^k, \vy^k,\vv^k;\vy^{k,0}$}
    \For{$j = 0, 1, \ldots$}
        \State Step (i): Solve the following problem to get $\vv^{k,j+1}$:
\begin{eqnarray}\label{alg:eq1}
\displaystyle \min_{\vv} \ \Psi^k(\vy^{k,j}, \vv), \quad {\rm s.t.} \ \vv\in \gV.
\end{eqnarray}
        \State Step (ii): Solve the following problem to get $\vy^{k,j+1}$:
\begin{eqnarray}\label{alg:eq2}
\displaystyle \min_{\vy} \ \Psi^k(\vy,\vv^{k,j+1}), \quad {\rm s.t.} \ \vy\in \Upsilon_\tau.
\end{eqnarray}
\State Step (iii): If $(\vy^{k,j+1},\vv^{k,j+1})$ is a partially optimal solution of Problem $\text{PPLA}_k$, stop.
    \EndFor
    \State Return $(\vy^{k,j+1},\vv^{k,j+1})$.
\EndProcedure
\end{algorithmic}
\end{algorithm}

Below, we discuss how the two subproblems involved --- namely, Problems~\eqref{alg:eq1} and \eqref{alg:eq2} --- can be efficiently solved. To begin with, note that $\Psi^k(\vy, \vv) = \sum_{a \in \gA} \psi_a^k(\evy_a, \evv_a)$, where
\begin{equation}
\begin{split}
    \psi_a^k(\evy_a, \evv_a) = ~&t_a(\evy_a,\evv_a) \cdot \evv_a + \eta \cdot G_a(\evy_a) + \rho^k \cdot \int_0^{\evv_a} t_a(\evy_a,w) \dif w  \\
    & - \rho^k \cdot \nabla_{\evy_a} g(\vy^k) \cdot \evy_a + \rho^k \cdot \beta^k \cdot (\evy_a - \evy_a^k)^2 + \rho^k \cdot \beta^k \cdot (\evv_a - \evv_a^k)^2 + \text{constant}.
    \label{eq:psi}
\end{split}
\end{equation}

\smallskip
\begin{itemize}[leftmargin=*]
\item \textbf{Problem~\eqref{alg:eq1}} is equivalent to solving a standard UE traffic assignment problem, where the link cost function is defined as $s_a(\evy_a^{k, j},\evv_a) := \nabla_{\evv_a} \psi_a^k(\evy_a^{k, j}, \evv_a)$. By Equation \eqref{eq:psi},
\begin{equation*}
    s_a(\evy_a^{k, j},\evv_a) = (1+\rho^k) \cdot t_a(\evy_a^{k,j},\evv_a) + \evv_a \cdot \nabla_{\eva} t_a(\evy_a^{k,j},\evv_a) +  2\cdot \rho^k \cdot \beta^k \cdot (\evv_a-\evv_a^k).
\end{equation*}
As $t_a(\evy_a^{k, j},\evv_a)$ is assumed to be strictly increasing and convex in $\evv_a$, it follows that
\begin{equation}
\nabla_{\evv_a} s_a(\evy_a^{k, j},\evv_a) = (2 + \rho^k) \cdot \nabla_{\evv_a} t_a(\evy_a^{k, j},\evv_a) + \evv_a \cdot \nabla_{\evv_a}^2 t_a(\evy_a^{k, j},\evv_a)  + 2 \cdot \rho^k \cdot \beta^k > 0,
\end{equation}
which means the ``link cost" $s_a(\evy_a^{k, j},\evv_a)$ strictly increases with the link flow $\evv_a$. This property allows Problem~\eqref{alg:eq1} to be solved via a wide range of efficient route-based traffic assignment algorithms from the literature \citep[e.g.,][]{jayakrishnan1994faster, xie2018greedy, li2024wardrop}.

\item \textbf{Problem~\eqref{alg:eq2}.}  Proposition~\ref{prop:closedsolution} indicates that to solve Problem~\eqref{alg:eq2}, one first needs to solve $|\gA|$ one-dimensional problems:
\begin{equation*}
    \evy_a^* = \argmin_{0 \leq \evy_a \leq u_a} ~\psi_a^k(\evy_a, \evv_a^{k, j + 1}),
\end{equation*}
which can be easily checked to be strongly convex and can be solved efficiently using the bisection method. 
The next step is to identify the $\tau$ smallest values among $\{\psi_a^k(\evy_a^*, \evv_a^{k, j + 1}) - \psi_a^k(0, \evv_a^{k, j + 1}) : a \in \gA\}$. Letting $\bar{\gA}$ denote the corresponding index set, the update is then given by $\evy_a^{k, j + 1} = \evy_a^*$ if $a \in \bar{\gA}$, and $0$ otherwise.
\end{itemize}

\smallskip
\noindent
\textbf{Convergence.} Before proceeding, we analyze the convergence of Algorithm~\ref{alg:ama}. Note that when Algorithm~\ref{alg:ama} terminates after a finite number of iterations, the returned solution is evidently a partially optimal solution to Problem~$\text{PPLA}_k$. Therefore, we focus our analysis on the case where the algorithm generates an infinite sequence of iterates. In this setting, the following theorem establishes the convergence of the algorithm to a partially optimal solution.

\begin{theorem}\label{thm:ama}
Suppose that Algorithm~\ref{alg:ama} generates an infinite sequence $\{(\vy^{k,j}, \vv^{k,j})\}_{j=1}^\infty$. Then,
\vspace{0.3em}
\begin{itemize}
\setlength{\itemsep}{0.5em}
\item[(i)] the sequence $\{\Psi^k(\vy^{k,j}, \vv^{k,j})\}_{j=0}^\infty$ is strictly decreasing and converges to a unique limit;
\item[(ii)] the sequence $\{(\vy^{k,j}, \vv^{k,j})\}_{j=1}^\infty$ has at least one accumulation point;
\item[(iii)] any accumulation point of $\{(\vy^{k,j}, \vv^{k,j})\}_{j=1}^\infty$ is a partially optimal solution of $\text{PPLA}_k$~\eqref{approP}.
\end{itemize}
\end{theorem}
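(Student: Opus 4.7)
The plan is to leverage two structural features of $\text{PPLA}_k$~\eqref{approP}. First, the regularization $\rho^k\beta^k\,\|(\vy-\vy^k,\vv-\vv^k)\|_2^2$ combined with Proposition~\ref{prop:convex} makes $\Psi^k$ strongly convex in each block variable; in particular, $\vv\mapsto\Psi^k(\vy,\vv)$ is strongly convex and $\gV$ is convex, so Step~(i) admits a unique minimizer. Second, $\Upsilon_\tau\times\gV$ is compact, since $\Upsilon_\tau$ is closed by Proposition~\ref{prop-cardiset} and bounded inside $\gY$, while $\gV$ is closed and bounded.

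For part~(i), I will argue strict monotone decrease by contradiction. Suppose the algorithm does not terminate at inner iteration $j$, so $(\vy^{k,j+1},\vv^{k,j+1})$ fails the partial-optimality test. Since Step~(ii) guarantees that $\vy^{k,j+1}$ minimizes $\Psi^k(\cdot,\vv^{k,j+1})$ over $\Upsilon_\tau$, the $\vv$-clause of Definition~\ref{defi1}(i) must fail, so $\vv^{k,j+1}$ does not minimize $\Psi^k(\vy^{k,j+1},\cdot)$ on $\gV$. At the next iteration, Step~(i) then produces the unique strongly convex minimizer $\vv^{k,j+2}\neq\vv^{k,j+1}$, yielding
\[
\Psi^k(\vy^{k,j+2},\vv^{k,j+2})\;\le\;\Psi^k(\vy^{k,j+1},\vv^{k,j+2})\;<\;\Psi^k(\vy^{k,j+1},\vv^{k,j+1}).
\]
Together with the lower bound $\min_{\Upsilon_\tau\times\gV}\Psi^k>-\infty$, the monotone convergence theorem delivers a unique limit, giving (i). Part~(ii) is then immediate from Bolzano--Weierstrass on the compact feasible set.

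For part~(iii), pick a subsequence $(\vy^{k,j_l},\vv^{k,j_l})\to(\vy^*,\vv^*)$, and by compactness extract a further subsequence along which $(\vy^{k,j_l+1},\vv^{k,j_l+1})\to(\tilde\vy,\tilde\vv)$; both limits lie in $\Upsilon_\tau\times\gV$ by closedness. Part~(i) forces $\Psi^k(\vy^*,\vv^*)=\Psi^k(\tilde\vy,\tilde\vv)$. Passing to the limit in the Step~(i) inequality $\Psi^k(\vy^{k,j_l},\vv^{k,j_l+1})\le \Psi^k(\vy^{k,j_l},\vv)$ (valid for every $\vv\in\gV$) gives $\Psi^k(\vy^*,\tilde\vv)\le\Psi^k(\vy^*,\vv)$, so $\tilde\vv$ is the unique minimizer of $\Psi^k(\vy^*,\cdot)$ on $\gV$. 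Plugging $\vy=\vy^*$ into the Step~(ii) inequality and passing to the limit yields $\Psi^k(\tilde\vy,\tilde\vv)\le\Psi^k(\vy^*,\tilde\vv)$. The chain
\[
\Psi^k(\vy^*,\vv^*)\;=\;\Psi^k(\tilde\vy,\tilde\vv)\;\le\;\Psi^k(\vy^*,\tilde\vv)\;\le\;\Psi^k(\vy^*,\vv^*),
\]
where the last step uses that $\tilde\vv$ minimizes $\Psi^k(\vy^*,\cdot)$ over $\gV$, collapses every term to equality, so $\vv^*$ is also a $\gV$-minimizer of $\Psi^k(\vy^*,\cdot)$; uniqueness then forces $\vv^*=\tilde\vv$. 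Finally, passing to the limit in the Step~(ii) inequality for an arbitrary $\vy\in\Upsilon_\tau$ gives $\Psi^k(\vy^*,\vv^*)\le\Psi^k(\vy,\vv^*)$, which verifies Definition~\ref{defi1}(i).

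The main obstacle will be the non-convexity of $\Upsilon_\tau$, which blocks both uniqueness of minimizers in Step~(ii) and any clean first-order gradient argument there. The plan circumvents this by invoking strong convexity only on the convex block $\gV$, and by chaining zero-order value inequalities so that partial optimality of $(\tilde\vy,\tilde\vv)$ is transferred to $(\vy^*,\vv^*)$ without ever asserting $\tilde\vy=\vy^*$.
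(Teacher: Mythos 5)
Your proof is correct and follows essentially the same route as the paper's: monotone descent from the two block inequalities, strictness via the uniqueness of the $\vv$-block minimizer (strong convexity on the convex set $\gV$) tied to the termination test, Bolzano--Weierstrass on the compact feasible set, and the double-subsequence limit argument that first pins down $\tilde\vv=\vv^*$ and then transfers both clauses of partial optimality to $(\vy^*,\vv^*)$. The only cosmetic difference is that your contradiction in part~(i) runs forward from non-termination (yielding strict decrease from index $j+1$ to $j+2$, hence for all $j\ge 1$) whereas the paper argues backward from equality of consecutive values; both hinge on the same uniqueness fact and the discrepancy at the initial index is immaterial for the convergence claim.
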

\begin{proof}
    See Appendix \ref{app:ama} for the proof.
\end{proof}

\subsubsection{Algorithm for CCBCEP-E}
\label{sec:original}

We are now ready to discuss how CCBCEP-E~\eqref{equi-p} can be efficiently solved. Observe that if we fix $\rho^k \equiv \rho$ and $\beta^k \equiv \beta$, then sequentially solving PPLA$_k$ via Algorithm~\ref{alg:ama}, that is, recursively setting
\begin{equation}\label{sequentialsolve}
(\vy^{k+1}, \vv^{k+1}) = \mathrm{AMA}(\rho, \beta; \vy^k, \vv^k; \vy^{k,0}), \quad k = 0, 1, \ldots,
\end{equation}
is expected to progressively solve PPA$_\rho$. By appropriately adjusting $\rho$ (and, if necessary, also $\beta$), one can then use solutions of PPA$_\rho$~\eqref{penalty_p} to gradually approximate CCBCEP-E~\eqref{equi-p}. However, this two-step procedure may be unnecessary. Instead, we propose an approach that sequentially solves PPLA$_k$ while adaptively updating both $\rho^k$ and $\beta^k$ in sync.

First, a core question is whether the solution to PPLA$_k$ can readily satisfy the equilibrium constraint of CCBCEP-E~\eqref{equi-p}, given that it is a penalized and approximate formulation of the latter. This is not straightforward: even for PPA$_\rho$, feasibility for CCBCEP-E~\eqref{equi-p} is not certainly guaranteed merely by gradually increasing the penalty parameter $\rho$, since neither problem is convex. This naturally raises the question of whether a sufficiently large $\rho$ can eliminate any violation of the constraint $\varphi(\vy,\vv) \leq 0$. We find that the answer is affirmative for the output of Algorithm~\ref{alg:ama}, even though it may only yield a partially optimal solution to $\mathrm{PPLA}_k$~\eqref{approP}, as formalized in the following proposition.

\begin{proposition}\label{prop:estimate}
    Denote $c_u = \diam(\gV)$ and let $b_u$ and $b_l$ be defined as 
    \begin{equation*}
        b_u=\max_{\vy\in \Upsilon_\tau, \vv \in \gV^*(\vy)} F(\vy, \vv) \quad \text{and} \quad b_l=\min_{\vy \in \Upsilon_\tau, \vv\in \gV} F(\vy, \vv).
    \end{equation*}
    Given any $\epsilon > 0$ and $\theta_u > 0$, as long as $\rho^k$ and $\beta^k$ satisfy
    \begin{equation}\label{condition}
    \rho^k \geq \frac{b_u+\theta \cdot c_u - b_l}{\epsilon} \quad \text{and} \quad \rho^k \cdot \beta^k\leq \theta_u, 
    \end{equation}
    then $(\vy^{k+1}, \vv^{k+1}) = \mathrm{AMA}(\rho, \beta; \vy^k, \vv^k; \vy^k)$, i.e., the solution generated by Algorithm \ref{alg:ama} for solving PPLA$_k$ with the initial point $\vy^{k,0} = \vy^k$, satisfies
    \[
    \Phi(\vy^{k+1}, \vv^{k+1}; \vy^k, \vv^k) \leq \epsilon,
    \]
    which readily implies that $\varphi(\vy^{k+1}, \vv^{k+1}) \leq \epsilon$ by Equation \eqref{eq:phi-Phi}.
\end{proposition}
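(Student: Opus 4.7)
The plan is to isolate $\Phi$ from the augmented objective $\Psi^k$ and then use the monotone descent of AMA together with a carefully chosen comparison point to make $\Psi^k(\vy^{k+1}, \vv^{k+1})$ bounded by a quantity independent of $\rho^k$. The argument has three pieces: (i) reading off an upper bound for $\Phi$ from $\Psi^k$; (ii) controlling $\Psi^k(\vy^{k+1}, \vv^{k+1})$ via descent along the AMA iterates; and (iii) combining the two with the hypotheses on $(\rho^k, \beta^k)$ to conclude.

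For step (i), the definition of $\Psi^k$ in~\eqref{approP} rearranges to
\begin{equation*}
\Phi(\vy^{k+1}, \vv^{k+1}; \vy^k, \vv^k) \leq \frac{\Psi^k(\vy^{k+1}, \vv^{k+1}) - F(\vy^{k+1}, \vv^{k+1})}{\rho^k},
\end{equation*}
after discarding the nonnegative quadratic regularization term $\rho^k \beta^k \|(\vy^{k+1} - \vy^k, \vv^{k+1} - \vv^k)\|_2^2$. Since $(\vy^{k+1}, \vv^{k+1}) \in \Upsilon_\tau \times \gV$, we have $F(\vy^{k+1}, \vv^{k+1}) \geq b_l$, so the remaining task is to upper-bound $\Psi^k(\vy^{k+1}, \vv^{k+1})$ by a constant not scaling with $\rho^k$.

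For step (ii), since AMA is initialized from $\vy^{k,0} = \vy^k$ and its first Step (i) produces $\vv^{k,1} = \argmin_{\vv \in \gV} \Psi^k(\vy^k, \vv)$, monotone descent of $\{\Psi^k(\vy^{k,j}, \vv^{k,j})\}$ (Theorem~\ref{thm:ama}(i), or directly from the minimization steps) gives
\begin{equation*}
\Psi^k(\vy^{k+1}, \vv^{k+1}) \leq \Psi^k(\vy^k, \vv^{k,1}) \leq \Psi^k(\vy^k, \tilde{\vv}) \qquad \text{for every } \tilde{\vv} \in \gV.
\end{equation*}
The crucial and non-obvious move is to take $\tilde{\vv} \in \gV^*(\vy^k)$ (which exists and is unique under Assumption~\ref{ass:1}). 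Because $\vy = \vy^k$ makes the linearization remainder vanish, $\Phi(\vy^k, \tilde{\vv}; \vy^k, \vv^k) = \varphi(\vy^k, \tilde{\vv}) = 0$, so
\begin{equation*}
\Psi^k(\vy^k, \tilde{\vv}) = F(\vy^k, \tilde{\vv}) + \rho^k \beta^k \|\tilde{\vv} - \vv^k\|_2^2 \leq b_u + \theta_u \cdot c_u,
\end{equation*}
using $F(\vy^k, \tilde{\vv}) \leq b_u$ (since $\vy^k \in \Upsilon_\tau$ and $\tilde{\vv} \in \gV^*(\vy^k)$), $\|\tilde{\vv} - \vv^k\|_2 \leq \diam(\gV) = c_u$, and the hypothesis $\rho^k \beta^k \leq \theta_u$.

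Assembling the two pieces and invoking the hypothesis on $\rho^k$ yields $\Phi(\vy^{k+1}, \vv^{k+1}; \vy^k, \vv^k) \leq (b_u + \theta_u c_u - b_l)/\rho^k \leq \epsilon$, and \eqref{eq:phi-Phi} gives $\varphi(\vy^{k+1}, \vv^{k+1}) \leq \epsilon$. The main obstacle is the choice of comparison point in step (ii): the naive choice $\tilde{\vv} = \vv^k$ would leave a residual $\rho^k \varphi(\vy^k, \vv^k)$ in $\Psi^k(\vy^k, \vv^k)$ that scales linearly with $\rho^k$ and cannot be absorbed by the factor $1/\rho^k$, while the equilibrium point $\tilde{\vv} \in \gV^*(\vy^k)$ annihilates this gap term. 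The product constraint $\rho^k \beta^k \leq \theta_u$ then keeps the remaining regularization term bounded as $\rho^k$ grows, explaining why both conditions in~\eqref{condition} must be imposed simultaneously rather than separately.
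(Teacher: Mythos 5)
Your proof is correct and follows essentially the same route as the paper's: both exploit the monotone descent of AMA from the initial point $\vy^{k,0}=\vy^k$ to bound $\Psi^k(\vy^{k+1},\vv^{k+1})$ by $\Psi^k(\vy^k,\tilde{\vv})$ with the equilibrium flow $\tilde{\vv}\in\gV^*(\vy^k)$ chosen precisely so that $\Phi(\vy^k,\tilde{\vv};\vy^k,\vv^k)=0$, and then isolate $\Phi$ using $F\geq b_l$ and the condition $\rho^k\beta^k\leq\theta_u$. The only cosmetic difference is that you present the isolation of $\Phi$ first and the descent bound second, whereas the paper does the reverse; the substance is identical (including the shared minor imprecision of bounding $\|\tilde{\vv}-\vv^k\|_2^2$ by $c_u$ rather than $c_u^2$).
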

\begin{proof}
    See Appendix \ref{app:estimate} for the proof.
\end{proof}

Although Proposition~\ref{prop:estimate} does not provide an explicit way to choose $\rho^k$ and $\beta^k$ --- since computing $b_u$ and $b_l$ is generally intractable --- it offers valuable insight: the output $(\vy^{k+1}, \vv^{k+1})$ of Algorithm~\ref{alg:ama} is guaranteed to be $\epsilon$-approximately feasible for CCBCEP-E~\eqref{equi-p}, provided that $\rho^k$ is sufficiently large and $\beta^k$ is sufficiently small. This holds despite the fact that Algorithm~\ref{alg:ama} may yield only a partially optimal solution to the intrinsically non-convex PPLA$_k$. Motivated by this observation, we propose to progressively increase $\rho^k$ and decrease $\beta^k$ in tandem while sequentially solving PPLA$_k$, which leads to the development of Algorithm~\ref{alg:main}. As this approach fundamentally exploits the DC structure of the gap function in CCBCEP-E~\eqref{equi-p} and incorporates a penalty scheme, we refer to it as the penalized DC (PDC) approach throughout this paper.

\begin{algorithm}
\caption{The PDC approach for solving CCBCEP-E~\eqref{equi-p}} \label{alg:main}
\vskip6pt
\begin{algorithmic}
\Procedure{PDC}{$\epsilon_1, \epsilon_2, \epsilon_3 > 0$, $0 < \theta_l < \theta_u$, $\rho^0 > 0$, $\sigma > 1$; $\vy^0 \in \Upsilon_{\tau}$}
    \State Set $\vv^0 \in \gV^*(\vy^0)$ and select $\beta^0 \in [\theta_l / \rho^0, \theta_u / \rho^0]$.

     \For{$k = 0, 1, \ldots$}
        \State Step (i): Set $(\vy^{k+1}, \vv^{k+1}) = \mathrm{AMA}(\rho^k, \beta^k, \vy^k, \vv^k; \vy^k)$ by running Algorithm \ref{alg:ama}.
        \State Step (ii): If $\|\vy^{k+1}-\vy^{k}\|_2 \leq \epsilon_1$, $\|\vv^{k+1}-\vv^{k}\|_2 \leq  \epsilon_2$ and $\Phi(\vy^{k+1}, \vv^{k+1}; \vy^{k}, \vv^{k}) \leq \epsilon_3$, stop.
        \State Step (iii): Update the penalty parameter as follows:
        \begin{equation}
            \rho_{k+1} =
            \begin{cases}
                \sigma \cdot \rho_{k}, \quad &\text{if}~\Phi(\vy^{k}, \vv^{k}; \vy^{k+1}, \vv^{k+1}) > \epsilon_3, \\
                \rho_{k}, \quad &\text{otherwise}.
            \end{cases}
        \end{equation}
        \State Step (iv): Select $\beta_{k+1} \in [\theta_l / \rho^{k + 1}, \theta_u / \rho^{k + 1}]$.
   \EndFor
   \State Return $(\vy^{k + 1},\vv^{k + 1})$.
\EndProcedure
\end{algorithmic}
\end{algorithm}

We explain the design of Algorithm~\ref{alg:main} as follows. The algorithm iteratively solves PPLA$_k$ using Algorithm~\ref{alg:ama} in Step~(i), while updating both the penalty parameter $\rho^k$ and the regularization parameter $\beta^k$ at each iteration in Steps~(iii) and (iv). The stopping criteria in Step~(ii) require convergence in both $\vy^k$ and $\vv^k$, as well as approximate constraint satisfaction given by $\Phi(\vy^{k+1}, \vv^{k+1}; \vy^{k}, \vv^{k}) \leq \epsilon_3$, which ensures $\varphi(\vv^{k+1}; \vy^{k+1}) \leq \epsilon$ by Equation \eqref{eq:phi-Phi}. Here, we check $\Phi(\vy^{k+1}, \vv^{k+1}; \vy^{k}, \vv^{k})$ instead of $\varphi(\vv^{k+1}; \vy^{k+1})$, since computing the latter would require solving an additional convex optimization problem. In Step~(iii), the penalty parameter is increased if approximate feasibility is not yet achieved. Step~(iv) sets $\beta^{k + 1}$ such that $\beta^{k+1} \cdot \rho^{k+1}$ always lies within $[\theta_l, \theta_u]$. The upper bound $\theta_u$ is required by Condition~\eqref{condition} to ensure the approximate feasibility of the iterates, while the lower bound $\theta_l$ guarantees a uniform sufficient decrease when solving PPLA$_k$ at each iteration. This balance is crucial for the convergence analysis of Algorithm~\ref{alg:main}, as will be discussed.

\smallskip
\noindent
\textbf{Convergence.} We close this section with a convergence analysis of Algorithm~\ref{alg:main}. Since Problem~\eqref{equi-p} is non-convex, our analysis focuses on convergence to the widely used notion of $\epsilon$-approximate Karush-Kuhn-Tucker (KKT) stationary solutions, which we formally define below.

\begin{definition}
\label{def:kkt}
(i)  We say that $(\vy^*,\vv^*)$ is a KKT stationary solution of CCBCEP-E~\eqref{equi-p} if there exists a nonnegative multiplier $\mu$ such that
    \begin{equation*}
    \begin{cases}
    (\vy^*,\vv^*)\in \Upsilon_\tau \times \gV, \quad \varphi(\vv^*; \vy^*)\leq 0,\\
    0\in \nabla F(\vy^*,\vv^*) + \nabla \varphi(\vv^*; \vy^*) \cdot \mu + \cN_{\Upsilon_\tau}(\vy^*)\times \cN_{\gV}(\vv^*).
    \end{cases}
    \end{equation*}
(ii) We say that $(\vy^*,\vv^*)$ is an $\epsilon$-approximate KKT stationary  solution of CCBCEP-E~\eqref{equi-p} if there exists a nonnegative multiplier $\mu$ such that 
    \begin{equation*}
    \begin{cases}
    (\vy^*,\vv^*)\in \Upsilon_\tau \times \gV, \quad \varphi(\vv^*; \vy^*)\leq \epsilon,\\
    \operatorname{dist}\left(0,\nabla F(\vy^*,\vv^*) + \nabla \varphi(\vv^*; \vy^*) \cdot \mu + \cN_{\Upsilon_\tau}(\vy^*)\times \cN_{\gV}(\vv^*)\right)\leq \epsilon.
    \end{cases}
    \end{equation*}
\end{definition}
By the definition, when $\epsilon = 0$, an $\epsilon$-approximate KKT stationary solution coincides with an exact KKT stationary solution. In what follows, we prove the convergence of Algorithm~\ref{alg:main} toward $\epsilon$-approximate KKT stationary solutions for arbitrarily small $\epsilon$.

As a first step in our analysis, we show that Step~(i) of Algorithm~\ref{alg:main} yields a sufficient decrease in the following merit function:
\[
\psi(\vy, \vv; \rho) = F(\vy, \vv) + \rho \cdot \varphi(\vy,\vv).
\]
Note that $\psi(\vy, \vv; \rho)$ is the objective function of PPA$_\rho$~\eqref{penalty_p}, which is a weighted sum of the total travel cost $F(\vy, \vv)$ and the gap function $\varphi(\vy,\vv)$.

\begin{proposition}\label{prop:decrease}
The iterates generated by Algorithm~\ref{alg:main} satisfy
\begin{equation}
\psi(\vy^{k+1}, \vv^{k+1}; \rho^k) \leq  \psi(\vy^{k}, \vv^{k}; \rho^k) - \rho^k \cdot \beta^k \cdot \|(\vy^{k+1} - \vy^k, \vv^{k+1} - \vv^k)\|_2^2, \quad \forall k \geq 0. 
\label{eq:suff-de}
\end{equation}
\end{proposition}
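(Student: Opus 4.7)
The plan is to chain three bounds that link $\psi(\vy^{k+1},\vv^{k+1};\rho^k)$ back to $\psi(\vy^{k},\vv^{k};\rho^k)$, using only the fact that $(\vy^{k+1},\vv^{k+1})$ is the output of Algorithm~\ref{alg:ama} applied to $\text{PPLA}_k$ with initial point $\vy^{k,0}=\vy^{k}$. The key conceptual point is that the linearized surrogate $\Psi^k$ coincides with the merit function at the base point $(\vy^{k},\vv^{k})$, yet elsewhere upper-bounds it by exactly the regularization penalty; AMA provides descent in $\Psi^k$, and the regularization penalty is precisely what transfers that descent into the sufficient-decrease term for $\psi$.

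First I would establish the base-point identity
\begin{equation*}
\Psi^k(\vy^{k},\vv^{k}) = F(\vy^{k},\vv^{k}) + \rho^k \cdot \varphi(\vy^{k},\vv^{k}) = \psi(\vy^{k},\vv^{k};\rho^k),
\end{equation*}
which holds because the linearization of the convex function $g$ at its own base point is exact, giving $\Phi(\vy^{k},\vv^{k};\vy^{k},\vv^{k}) = f(\vy^{k},\vv^{k}) - g(\vy^{k}) = \varphi(\vy^{k},\vv^{k})$, while the quadratic regularizer vanishes automatically. Next, I would invoke the monotone-descent property of AMA: in Step~(i) with $j=0$ the incumbent $\vv^{k} \in \gV$ is feasible for the strongly convex subproblem~\eqref{alg:eq1}, so its minimizer satisfies $\Psi^k(\vy^{k},\vv^{k,1}) \leq \Psi^k(\vy^{k},\vv^{k})$; Step~(ii) and all subsequent block updates preserve monotonicity. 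Whether AMA terminates in finitely many iterations or via an accumulation point of an infinite run (cf.\ Theorem~\ref{thm:ama}), the output therefore satisfies $\Psi^k(\vy^{k+1},\vv^{k+1}) \leq \Psi^k(\vy^{k},\vv^{k}) = \psi(\vy^{k},\vv^{k};\rho^k)$.

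The final step is to apply the upper bound~\eqref{eq:phi-Phi}, $\varphi(\vy,\vv) \leq \Phi(\vy,\vv;\vy^{k},\vv^{k})$, at $(\vy^{k+1},\vv^{k+1})$ so as to recover $\psi$ from $\Psi^k$:
\begin{equation*}
\psi(\vy^{k+1},\vv^{k+1};\rho^k) \leq F(\vy^{k+1},\vv^{k+1}) + \rho^k \cdot \Phi(\vy^{k+1},\vv^{k+1};\vy^{k},\vv^{k}) = \Psi^k(\vy^{k+1},\vv^{k+1}) - \rho^k \cdot \beta^k \cdot \|(\vy^{k+1}-\vy^{k},\vv^{k+1}-\vv^{k})\|_2^2.
\end{equation*}
Combining this with the monotone-descent bound yields~\eqref{eq:suff-de}. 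The main subtlety, rather than any single estimate, lies in correctly identifying this three-term decomposition: once one sees that the regularization penalty in $\Psi^k$ is exactly what upgrades AMA's plain descent on the surrogate into \emph{sufficient} decrease on the merit function $\psi$, the remainder is purely algebraic.
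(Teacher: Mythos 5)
Your proposal is correct and follows essentially the same route as the paper's proof: the exactness of the linearization at the base point (so that $\Psi^k(\vy^{k},\vv^{k})=\psi(\vy^{k},\vv^{k};\rho^k)$), the descent of AMA on $\Psi^k$ starting from $\vy^{k,0}=\vy^{k}$, and the majorization $\varphi\le\Phi$ to convert the surrogate value at $(\vy^{k+1},\vv^{k+1})$ back into $\psi$ minus the quadratic regularization term. The only cosmetic difference is the order in which the three bounds are assembled.
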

\begin{proof}
    See Appendix \ref{app:decrease} for the proof.
\end{proof}

Based on Propositions \ref{prop:estimate} and \ref{prop:decrease}, we then establish the finite termination of Algorithm~\ref{alg:main}.

\begin{proposition}\label{prop-welldefined}
In Algorithm~\ref{alg:main}, the termination condition specified in Step~(ii) will be satisfied after a finite number of iterations.
\end{proposition}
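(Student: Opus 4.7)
The plan is to establish finite termination in two phases: first show that the penalty parameter $\rho^k$ can only be increased finitely many times, hence stabilizes at some $\bar{\rho}$; then show that, once $\rho^k$ is constant, the sufficient-decrease inequality of Proposition~\ref{prop:decrease} forces the consecutive iterate differences to vanish, so all three stopping criteria are simultaneously met after finitely many iterations.

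For the first phase, I would invoke Proposition~\ref{prop:estimate}. Since Step~(iv) of Algorithm~\ref{alg:main} always chooses $\beta^k$ so that $\rho^k \cdot \beta^k \leq \theta_u$, the only remaining hypothesis of Proposition~\ref{prop:estimate} (taken with $\epsilon = \epsilon_3$) is $\rho^k \geq (b_u + \theta_u \cdot c_u - b_l)/\epsilon_3$. Each update in Step~(iii) multiplies $\rho^k$ by $\sigma > 1$, so $\rho^k$ exceeds this fixed threshold after at most $\lceil \log_\sigma((b_u + \theta_u \cdot c_u - b_l)/(\epsilon_3 \cdot \rho^0))\rceil$ updates. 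Once this happens, Proposition~\ref{prop:estimate} guarantees $\Phi(\vy^{k+1}, \vv^{k+1}; \vy^k, \vv^k) \leq \epsilon_3$ at every subsequent iteration, and Step~(iii) never triggers another increase. Denote the resulting stabilized value by $\bar{\rho}$ and let $K_0$ be the first index beyond which $\rho^k \equiv \bar{\rho}$.

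For the second phase, set $\bar{\beta} := \beta^k$ for $k \geq K_0$ (possibly varying, but always with $\bar{\rho}\cdot \bar{\beta} \geq \theta_l$). Proposition~\ref{prop:decrease} then gives, for all $k \geq K_0$,
\begin{equation*}
\psi(\vy^{k+1}, \vv^{k+1}; \bar{\rho}) \leq \psi(\vy^{k}, \vv^{k}; \bar{\rho}) - \theta_l \cdot \|(\vy^{k+1} - \vy^k, \vv^{k+1} - \vv^k)\|_2^2.
\end{equation*}
Since $F(\vy,\vv) \geq b_l$ on $\Upsilon_\tau \times \gV$ (by the definition of $b_l$ in Proposition~\ref{prop:estimate}) and $\varphi(\vy,\vv) \geq 0$, the merit function satisfies $\psi(\cdot, \cdot; \bar{\rho}) \geq b_l$. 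Telescoping from $K_0$ to $K$ and passing $K \to \infty$ yields $\sum_{k \geq K_0} \|(\vy^{k+1} - \vy^k, \vv^{k+1} - \vv^k)\|_2^2 < \infty$, so $\|\vy^{k+1} - \vy^k\|_2 \to 0$ and $\|\vv^{k+1} - \vv^k\|_2 \to 0$. Hence the first two stopping tolerances $\epsilon_1, \epsilon_2$ must be met for some $k \geq K_0$, while the third test $\Phi(\vy^{k+1}, \vv^{k+1}; \vy^k, \vv^k) \leq \epsilon_3$ is already automatic for all $k \geq K_0$ by the choice of $\bar{\rho}$. Step~(ii) therefore triggers in finite time.

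The main obstacle, as I see it, is the interplay between the two mechanisms: the summability from sufficient decrease requires $\rho^k$ to eventually be constant (otherwise the merit function $\psi(\cdot, \cdot; \rho^k)$ changes each iteration and the telescoping breaks), while fixing $\rho^k$ requires feasibility, which in turn needs $\rho^k$ to be above the Proposition~\ref{prop:estimate} threshold. The resolution is to note that these two requirements are compatible because (a) $\rho^k$ increases geometrically and only when the feasibility test fails, and (b) the threshold in Proposition~\ref{prop:estimate} is a fixed constant independent of $k$. Once this sequencing is made explicit, the rest of the argument is a routine combination of the penalty-feasibility bound with a standard block-coordinate sufficient-decrease telescoping.
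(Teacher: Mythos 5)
Your proposal is correct and follows essentially the same two-phase argument as the paper: Proposition~\ref{prop:estimate} forces the penalty updates to stop after finitely many increases (the paper phrases this as a contradiction with $\rho^k \to \infty$, you phrase it directly via the geometric growth, but the substance is identical), and then the sufficient-decrease inequality of Proposition~\ref{prop:decrease} with $\rho^k\beta^k \geq \theta_l$ and the lower bound on the merit function drives the iterate differences to zero. No gaps.
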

\begin{proof}
    See Appendix \ref{app:prop-welldefined} for the proof.
\end{proof}

We conclude this section by formally establishing the convergence guarantee of Algorithm~\ref{alg:main}. The following theorem shows that, for any prescribed tolerance, the algorithm computes an $\epsilon$-approximate KKT stationary solution in finitely many iterations.

\begin{theorem}\label{thm-main}
For any prescribed tolerance $\epsilon > 0$, choose $\epsilon_1$ sufficiently small and set
\begin{equation}
\epsilon_2 = \frac{\epsilon}{2\sqrt{2} \cdot \theta_u}
\quad \text{and} \quad
\epsilon_3 = \epsilon.
\label{eq:epsilon-2}
\end{equation}
Then Algorithm~\ref{alg:main} terminates after a finite number of iterations and returns an $\epsilon$-approximate KKT stationary solution of CCBCEP-E~\eqref{equi-p}.
 
\end{theorem}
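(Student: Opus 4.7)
The plan is to chain together the ingredients already built up in the paper. Finite termination has essentially been handed to us by Proposition \ref{prop-welldefined}, so let $k$ denote the (finite) index at which Step~(ii) of Algorithm~\ref{alg:main} fires, and work with the returned pair $(\vy^{k+1},\vv^{k+1})$. The feasibility half of Definition~\ref{def:kkt} is immediate: $\vy^{k+1}\in\Upsilon_\tau$ and $\vv^{k+1}\in\gV$ by construction, while the termination criterion $\Phi(\vy^{k+1},\vv^{k+1};\vy^k,\vv^k)\leq\epsilon_3=\epsilon$ combined with inequality \eqref{eq:phi-Phi} yields $\varphi(\vy^{k+1},\vv^{k+1})\leq\epsilon$.

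The substantive work is the approximate stationarity inclusion, with multiplier choice $\mu=\rho^k$. Since $(\vy^{k+1},\vv^{k+1})$ is the output of \textproc{AMA}, it is a partially optimal solution of $\text{PPLA}_k$, hence stationary (as remarked after Definition~\ref{defi1}), so
\[
\vzero \in \nabla \Psi^k(\vy^{k+1},\vv^{k+1}) + \cN_{\Upsilon_\tau}(\vy^{k+1})\times\cN_{\gV}(\vv^{k+1}).
\]
I would then expand $\nabla\Psi^k$ explicitly from the definition of $\text{PPLA}_k$, write out $\nabla\varphi(\vy^{k+1},\vv^{k+1})=\nabla f(\vy^{k+1},\vv^{k+1})-(\nabla g(\vy^{k+1}),\vzero)$, and subtract to isolate a residual
\[
\ve^k \;=\; \rho^k\cdot\bigl(\nabla g(\vy^{k+1})-\nabla g(\vy^k),\;\vzero\bigr) \;+\; 2\rho^k\beta^k\cdot(\vy^{k+1}-\vy^k,\;\vv^{k+1}-\vv^k),
\]
so that $-\ve^k \in \nabla F(\vy^{k+1},\vv^{k+1}) + \rho^k\nabla\varphi(\vy^{k+1},\vv^{k+1}) + \cN_{\Upsilon_\tau}(\vy^{k+1})\times\cN_{\gV}(\vv^{k+1})$, and therefore the distance in Definition~\ref{def:kkt}(ii) is at most $\|\ve^k\|_2$.

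The last step is to bound $\|\ve^k\|_2$ by $\epsilon$. Using Lipschitz continuity of $\nabla g$ with some constant $L_g$ (which follows from Assumption~\ref{ass:1} and the compactness of $\gY$ and $\gV$, since $\nabla g$ has the explicit integral form \eqref{gra-formula}), together with the invariant $\rho^k\beta^k\leq\theta_u$ maintained by Step~(iv), gives
\[
\|\ve^k\|_2 \;\leq\; \rho^k L_g\,\|\vy^{k+1}-\vy^k\|_2 + 2\theta_u\sqrt{\|\vy^{k+1}-\vy^k\|_2^2+\|\vv^{k+1}-\vv^k\|_2^2}.
\]
The Step~(ii) tolerances bound the displacements by $\epsilon_1$ and $\epsilon_2$. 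With the choice $\epsilon_2=\epsilon/(2\sqrt{2}\,\theta_u)$ from \eqref{eq:epsilon-2}, the second term is at most $2\theta_u(\epsilon_1+\epsilon_2)\leq 2\theta_u\epsilon_1+\epsilon/\sqrt{2}$, which leaves a positive budget of $(1-1/\sqrt{2})\epsilon$ to absorb the first term plus the leftover $2\theta_u\epsilon_1$. Since $\rho^k$ is finite at termination (its final value $\rho^\star$ is determined by Proposition~\ref{prop:estimate} once $\epsilon_3$ is fixed), picking $\epsilon_1$ small enough that $(\rho^\star L_g+2\theta_u)\epsilon_1 \leq (1-1/\sqrt{2})\epsilon$ closes the estimate and delivers $\|\ve^k\|_2\leq\epsilon$.

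The main obstacle I anticipate is making the ``$\epsilon_1$ sufficiently small'' clause genuinely \emph{a priori}: one has to argue that the terminating penalty value $\rho^\star$ can be bounded in terms of quantities that are available before the algorithm is run, using Proposition~\ref{prop:estimate} to certify that once $\rho^k\geq(b_u+\theta_u c_u-b_l)/\epsilon$ the feasibility test passes, so the geometric growth $\rho^{k+1}=\sigma\rho^k$ cannot exceed roughly $\sigma(b_u+\theta_u c_u-b_l)/\epsilon$. Only then does the required smallness of $\epsilon_1$ depend purely on $\epsilon$, $\sigma$, $\theta_u$, $L_g$, and problem data, which is what is needed for a clean statement.
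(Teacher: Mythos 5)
Your proposal is correct and follows essentially the same route as the paper's proof: finite termination from Proposition~\ref{prop-welldefined}, feasibility from the Step~(ii) test together with \eqref{eq:phi-Phi}, and approximate stationarity with multiplier $\mu=\rho^k$ obtained by isolating exactly the residual $\ve^k$ the paper calls $(\bm\zeta_1^{k},\bm\zeta_2^{k})$ and bounding it via $\rho^k\beta^k\le\theta_u$ and the displacement tolerances. The only caveat is that Assumption~\ref{ass:1} does not guarantee Lipschitz continuity of $\nabla g$ (only continuity, hence uniform continuity on the compact set $\Upsilon_\tau$ by Heine--Cantor, which is what the paper uses and which suffices for your ``$\epsilon_1$ sufficiently small'' step); your closing observation that Proposition~\ref{prop:estimate} bounds the terminal penalty by roughly $\sigma(b_u+\theta_u c_u-b_l)/\epsilon$, making the choice of $\epsilon_1$ genuinely a priori, is a point the paper leaves implicit.
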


\begin{proof}
    See Appendix \ref{app:thm-main} for the proof.
\end{proof}

Theorem \ref{thm-main} ensures the practical applicability of our approach, as it guarantees convergence to an $\epsilon$-approximate KKT solution within a finite number of iterations. In the next section, we demonstrate the performance of Algorithm~\ref{alg:main} through computational experiments.

\section{Numerical Study}\label{sec:num}

To examine the performance of Algorithm \ref{alg:main}, we conduct a set of numerical experiments on three networks frequently used in the transportation literature, including a small network (Hearn) and two aggregated networks of real-world cities (Sioux-Falls and Anaheim). The topology of the Hearn network, which has 9 nodes and 18 links, is shown in Figure \ref{fig:nn}. For the other two, the Sioux-Falls network has 24 nodes and 76 links, and the Anaheim network has 416 nodes and 914 links. For more details of these networks, the reader may consult the Transportation Networks GitHub Repository \citep{networks-github}. 
\begin{figure}[ht]
    \centering
    \includegraphics[width=0.6\textwidth]{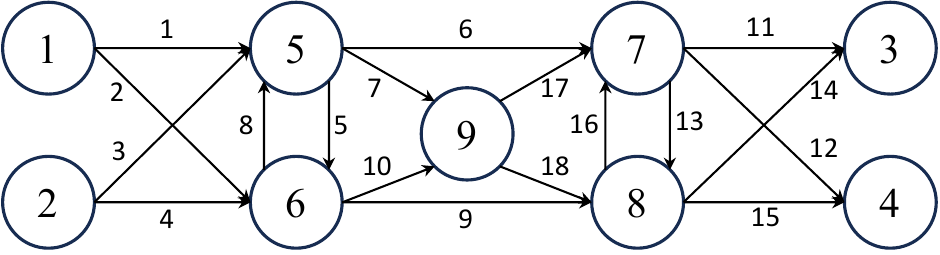} 
    \centering
    \caption{Topology of the Hearn network.}
    \label{fig:nn}
\end{figure}

In all networks, we assume the monetary cost $G_a(\evy_a)$ of expanding each link $a$ is quadratic with respect to the added capacity $\evy_a$, i.e., for some $b_a > 0$, 
\begin{equation}
    G_a(\evy_a) = b_a \cdot \evy_a^2, \quad \forall a \in \gA,
\end{equation}
Meanwhile, we assume the link cost function $t_a(\evy_a,\evv_a)$ follows the BPR equation \eqref{bpr}.

\smallskip
\noindent
\textbf{Alternative approaches.} 
Benchmarking CCBCEP \eqref{p} is nontrivial: to the best of our knowledge, there are no readily applicable baselines that optimize directly under a hard cardinality constraint. Accordingly, we compare our framework against two practical proxies:  \textbf{M1} (sensitivity-based prescreening) and \textbf{M2} ($\ell_1$ regularization). 

\smallskip
\begin{itemize}[leftmargin=*]

\item
\textbf{M1.} The prescreening approach evaluates all links with $u_a > 0$ (i.e., links eligible for expansion) by computing their marginal decrease in the objective value per unit of expansion, namely $e_a$ in Equation~\eqref{eq:sensitivity}. The $\tau$ links with the largest $e_a$ values are then selected to form $\hat{\gA}$, and the solution to the corresponding BCEP$(\hat{\gA})$ is used as a baseline for comparison with our approach. To solve BCEP$(\hat{\gA})$, we adopt the descent algorithm of \citet{yu2025scalable}, which iteratively improves $\vy$ using the gradient $\nabla F^*(\vy)$, where $F^*(\vy) = F(\vy, v^*(\vy))$ and $v^*(\vy)$ denotes the unique link flow pattern induced by $\vy$. In their method, $\nabla F^*(\vy)$ is efficiently computed through a novel technique that integrates the advantages of implicit differentiation and automatic differentiation.

\item
\textbf{M2.} %
The $\ell_1$-regularization technique has been widely used to induce fewer non-zero weights (i.e., sparsity) in machine learning and statistics \citep{tibshirani1996regression}. We introduce this regularization term to the BCEP objective for inducing sparsity in the expansion plan. Specifically, it solves
\begin{equation}\label{cndp-l1}
\begin{aligned}
\ell_1\text{-BCEP}\qquad 
\min_{\vy,\vv}\ & F(\vy,\vv) + \alpha \cdot \|\vy\|_1,\\
\text{s.t.}\ & \vy\in \gY,\quad \vv \in \gV^*(\vy),
\end{aligned}
\end{equation}
where $\alpha > 0$ is a regularization parameter that promotes sparsity in $\vy$. 
To the best of our knowledge, this formulation has not been explored in the CNDP literature. Yet, a readily available approach to solving $\ell_1$-BCEP with a fixed $\alpha$ is again to employ the descent method of \citet{yu2025scalable}, which in this case operates on the (sub)gradient 
$
\nabla F^*(\vy) + \alpha \cdot \vone.
$

Compared with the proposed CCBCEP, the $\ell_1$-regularized model provides only \emph{indirect} control over sparsity: the relationship between $\alpha$ and the support size of the solution is unclear, and for any desired $\tau$, there is no guarantee that a suitable $\alpha$ will yield a solution with exactly $\tau$ nonzeros. 
Moreover, the $\ell_1$ regularization introduces shrinkage bias on the selected links, potentially underestimating desirable expansion levels. To employ the $\ell_1$-regularization approach as a benchmark, we design a dedicated procedure that dynamically adjusts $\alpha$ to approximate the target sparsity and incorporates a post-processing step to remove shrinkage bias. The search for $\alpha$ is carried out in two phases. In the \emph{coarse search} phase, starting from an initial $\alpha > 0$ (e.g., $\alpha=0.1$), we repeatedly multiply $\alpha$ by a constant factor $\gamma_c>1$ (e.g., $\gamma_c=2$) until the number of positive components in $\vy$ does not exceed the target cardinality $\tau$, thereby ensuring sufficient regularization. In the subsequent \emph{refinement} phase, we decrease $\alpha$ in smaller multiplicative steps with factor $\gamma_r \in (0,1)$ (e.g., $\gamma_r=0.95$) until the number of nonzero components exceeds $\tau$. The last $\alpha$ satisfying the cardinality is then selected. Because the $\ell_1$ penalty shrinks nonzero entries toward zero, the final solution is debiased by re-solving an unregularized BCEP restricted to the identified support, and the resulting solution is used for comparison with Algorithm~\ref{alg:main}. Through this dedicated procedure, the $\ell_1$-based approach can yield strong benchmark solutions: if the support induced by the chosen $\alpha$ coincides with the optimal support, the debiasing step may recover the optimal CCBCEP solution. Thus, this procedure can be regarded as a powerful reference point against which to assess the performance of our proposed method.

\end{itemize}

\smallskip
\noindent
\textbf{Reference points.} To evaluate the effectiveness of the solutions returned by all tested algorithms, we compare their objective values against several reference points.

\smallskip
\begin{itemize}[leftmargin=*]
\item The total travel time at UE when no capacity is expanded, namely $F_0 = F(\vy, \vv)\mid_{\vy = \vzero, \ \vv \in \gV^*(\vy)}$, which serves as a natural upper bound.

\item The optimal objective value of the system-optimal (SO) problem, denoted by $F_{\mathrm{so}}$,
\begin{eqnarray}\label{so}
\begin{array}{rl}
\displaystyle{\mathrm{SO}}~~~~~~\min_{\vy,\vv} & ~F(\vy,\vv), \\ 
{\rm s.t.} & ~\vy\in \gY, \quad \vv \in \gV,
\end{array}
\end{eqnarray}
where both the equilibrium and cardinality constraints are removed. This corresponds to the ``least constrained'' outcome, thus offering a natural lower bound. The SO problem~\eqref{so} is a simple convex program which can be solved globally through an alternating minimization approach that, starting from an $\vy^0 \in \gY$ (e.g., $\vy^0 = \vzero$), iteratively updates, for $k = 0, 1, \ldots$,
\begin{equation*}
    \vv^{k + 1} = \argmin_{\vv \in \gV}~F(\vy^k, \vv) \quad \text{and then} \quad \vy^{k + 1} = \argmin_{\vy \in \gY}~F(\vy, \vv^{k + 1}),
\end{equation*}
where the first is a standard SO traffic assignment problem while the second is a simple box-constrained convex program.

\end{itemize}
\smallskip
For better comparison, the objective corresponding to any $\vy \in \gY$ is reported on a relative scale with respect to $F_0$ and $F_{\mathrm{so}}$, computed by
\[
\frac{F(\vy, \vv) - F_{\mathrm{so}}}{F_0 - F_{\mathrm{so}}} \Big |_{\vv \in \gV^*(\vy)}.
\]
Throughout the experiments, whenever it is necessary to solve a UE problem, we employ \citet{xie2018greedy}'s algorithm, a route-based traffic assignment algorithm with cutting-edge efficiency. All experiments are coded using Python, and all results are produced on a MacBook Air (Apple M3 Chip) with an 8-core CPU.

\subsection{Hearn network}

In the Hearn network, there are 4 OD pairs: $1 \to 3$, $1 \to 4$, $2 \to 3$, and $2 \to 4$, where the demands are 1, 2, 3, and 4, respectively. The link data, including the free-flow travel time $t_{0, a}$, the capacity $c_a$, and the parameter in the expansion cost function $b_a$, are configured according to Table \ref{tab:hearn-link}.  In the experiment, we set $\eta = 1$ and $\tau = 2$ for all links. Under this setting, it can be computed that $F_0 = 245.6$ and $F_{\text{so}} = 183.6$. To facilitate comparison across methods, all reported objectives shall be presented on a relative scale with respect to these two reference points, as discussed.
\begin{table}[htbp]
  \centering
  \caption{The values of  $t_{0, a}$, $c_a$, and $b_a$ in the Hearn network.}
  \vspace{2pt}
  \small
    \begin{tabular}{ccccccccccccccccccc}
    \toprule
    Link  & 1     & 2     & 3     & 4     & 5     & 6     & 7     & 8     & 9     & 10    & 11    & 12    & 13    & 14    & 15    & 16    & 17    & 18 \\
    \midrule
    $t_{0, a}$ & 5     & 6     & 3     & 9     & 9     & 2     & 8     & 2.67  & 6     & 7     & 3     & 6     & 2     & 8     & 6     & 4     & 4     & 8 \\
    $c_a$ & 1.2   & 1.8   & 3.5   & 3.5   & 2     & 1.1   & 2.6   & 1.1   & 3.3   & 3.2   & 2.5   & 2.4   & 1.9   & 3.9   & 4.3   & 3.6   & 2.6   & 3 \\
    $b_a$     & 5     & 18    & 9     & 9     & 18    & 2     & 24    & 5.33  & 6     & 21    & 3     & 18    & 4     & 24    & 6     & 8     & 12    & 24 \\
    \bottomrule
    \end{tabular}%
  \label{tab:hearn-link}%
\end{table}%

Given the relatively small size of the network, the global optimal solution to CCBCEP~\eqref{p} can be obtained via a brute-force enumeration of all possible combinations of expanded links. For each candidate support $\hat{\gA}$ (with $|\hat{\gA}| = 2$), fixing the nonzero locations reduces CCBCEP~\eqref{p} to a standard BCEP($\hat{\gA}$) whose decision variable is two-dimensional. 
We then solve each BCEP($\hat{\gA}$) to (near-)global optimality via a brute-force search over the two expansion levels and retain the best objective value across all supports. Using this procedure, we find that the global optimum corresponds to $y_6 = 2.554$ and $y_{11} = 0.793$, which yields an objective value of $210.1$, or $42.7\%$ on the relative scale.
When implementing Algorithm~\ref{alg:main}, we set $\rho_0 = 1$, $\sigma = 1.25$, $\theta_l = 10$, $\theta_u = 20$, $\beta_{k} = (\theta_l + \theta_u) / \rho^k$, and $\epsilon_1 = \epsilon_2 = \epsilon_3 = 10^{-3}$. All alternative approaches are implemented as described earlier. Below, we summarize the performance of all tested approaches.

\smallskip
\noindent
\textbf{M1}. The sensitivity analysis ranks Links~6 and~9 as having the highest $e_a$ values. Solving the corresponding BCEP instance yields an optimal expansion of $\evy_6 = 2.454$ and $\evy_9 = 0.315$, at which the relative objective value is $51.0\%$, higher than the global solution's $42.7\%$.

\smallskip
\noindent
\textbf{M2}. Starting with $\alpha = 0.1$, $\ell_1$-BCEP expands 11 links. Increasing $\alpha$ to $12.8$ satisfies the cardinality constraint. Decreasing $\alpha$ twice ($12.8 \to 11.55 \to 10.97$) violates the constraint again. So, we set $\alpha = 11.55$, which produces a solution expanding Links 6 and 11, identical to the global optimal solution. Hence, after debiasing, the eventual output of \textbf{M2} matches the global optimum.

\smallskip
\noindent\textbf{Algorithm~\ref{alg:main}}. The algorithm converges smoothly to a stationary point, identifying Links 6 and 11 for expansion, with expansion levels identical to those of the global optimal solution, achieving a relative objective of $42.7\%$. To show more details of the convergence trajectory, Figure \ref{fig:hearn-solution} illustrates how the support of the solution evolves over iterations: initially, the algorithm activates Links 6 and 9, but as the iterations progress, Link 9 is dropped and Link 11 enters the support, after which the expansion levels gradually stabilize. This trajectory implies that the algorithm is capable of correcting early-stage selections and steering the solution toward a globally optimal support set.

\begin{figure}[ht]
  \centering
    \includegraphics[width=0.95\textwidth]{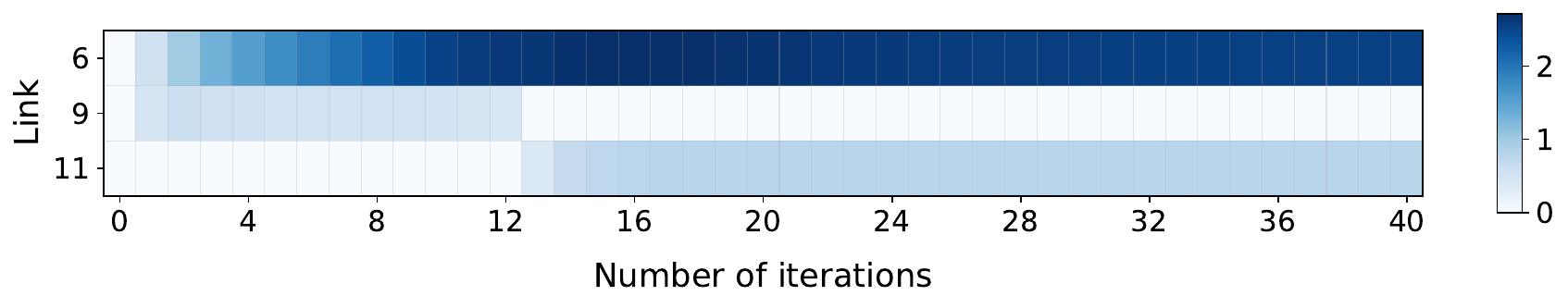} 
    \caption{Evolution of $\vy^k$ in Algorithm \ref{alg:main} in the Hearn network.}
    \label{fig:hearn-solution}
\end{figure}

In addition to tracking the decision variables, we also monitor the update rules for the penalty and regularization parameters, as well as the convergence behavior of error metrics.
Figure~\ref{fig:hearn-pdc}-(a) illustrates the trajectories of the penalty parameter $\rho^k$ and the regularization parameter $\beta^k$. We observe that $\rho^k$ continues to grow for more than 30 iterations, during which $\beta^k$ decreases in sync. Both parameters eventually stabilize just a few iterations before convergence.
Figure~\ref{fig:hearn-pdc}-(b) reports the evolution of the three error metrics:
$e_1^k = \|\vy^{k+1}-\vy^k\|_2$,
$e_2^k = \|\vv^{k+1}-\vv^k\|_2$, and
$e_3^k = \Phi(\vy^{k+1}, \vv^{k+1}; \vy^{k}, \vv^{k})$.
It can be seen that $e_2^k$ and $e_3^k$ reach the tolerance threshold $10^{-3}$ at almost the same iteration, after which $\rho^k$ stops increasing by design. Subsequently, $e_1^k$ decreases rapidly and also falls below the tolerance level, so the termination condition of Algorithm \ref{alg:main} is reached. 
\begin{figure}[ht]
\vskip 0.1in
\centering

\begin{subfigure}[b]{0.44\textwidth}
\includegraphics[width=0.85\columnwidth]{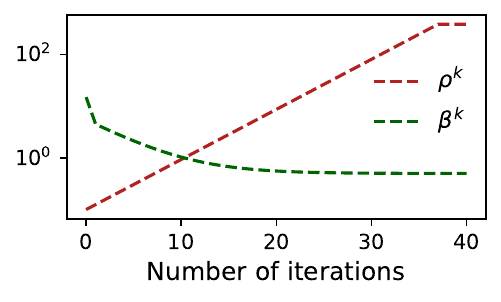}
\captionsetup{size=small}
\caption{$\rho^k$ and $\beta^k$.}
\label{fig:hearn-pdc-1}
\end{subfigure}
\begin{subfigure}[b]{0.44\textwidth}
\includegraphics[width=0.85\columnwidth]{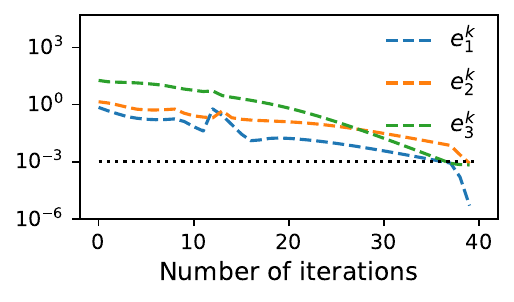}
\captionsetup{size=small}
\caption{$e_1^k$, $e_2^k$, and $e_3^k$.}
\label{fig:hearn-pdc-2}
\end{subfigure}

\caption{The trajectories of several parameters in Algorithm \ref{alg:main} on the Hearn network.}
\label{fig:hearn-pdc}
\end{figure}

\smallskip
\noindent
 \textbf{Comparison.}
In Figure~\ref{fig:hearn}, we illustrate the convergence trajectories of all tested algorithms by plotting the relative objective value of the best solution identified by each algorithm as a function of CPU time. Here, the objectives corresponding to only solutions that satisfy the cardinality constraint are evaluated. The results clearly show that $\textbf{M1}$ stucks at a suboptimal solution far away from global optimality. The best objective found by \textbf{M2} remains unchanged for a long time, as during the early coarse search phase of tuning $\alpha$, the resulting solution still does not satisfy the cardinality constraint (and hence will not be evaluated). \textbf{M2} eventually converges to global optimality, after a long period of tuning $\alpha$, and eventually a debiasing step to remove the affect of $\ell_1$ regularization.  Algorithm~\ref{alg:main}, on the contrary, converges to the global optimum straightforwardly, using much less CPU time. 

\begin{figure}[ht]
  \centering
    \includegraphics[width=0.45\textwidth]{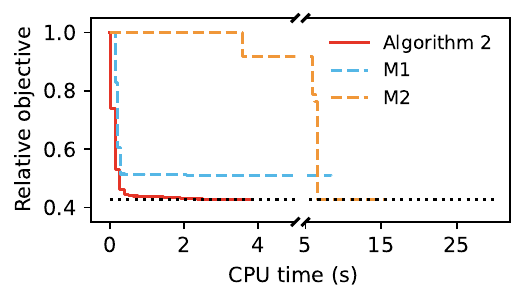} 
    \caption{Convergence trajectories of the tested algorithms on the Hearn network; the black dotted line highlights the relative objective corresponding to the global optima.}
    \label{fig:hearn}
\end{figure}
In this toy example, where the global solution can be computed exactly, the results validate the ability of Algorithm~\ref{alg:main} to solve the problem to optimality. Even in such a simple setting, however, \textbf{M1} becomes trapped in a suboptimal solution. By contrast, \textbf{M2} succeeds in finding the global optimum, albeit with higher computational cost. This suggests that \textbf{M2} can serve as a strong baseline method for comparison.

\subsection{Sioux-Falls network}

For the Sioux-Falls network experiments, our setting extends a well-established BCEP benchmark instance proposed by \citet{suwansirikul1987equilibrium} that has been used in numerous BCEP studies \citep[e.g.,][]{friesz1992simulated, marcotte1992efficient, meng2001equivalent, li2022differentiable, guo2025penalized}. The benchmark specifies the capacity expansion parameters $b_a$ for 10 designated links, denoted as $\bar \gA = \{16, 17, 19, 20, 25, 26, 29, 39, 48, 74\}$, which we maintain in our implementation. For the remaining links, we determine $b_a$ values proportionally to their lengths. The setting is shown in Table \ref{tab:sf-setting}, where links in \citet{suwansirikul1987equilibrium}'s benchmark is marked in red. As shown in the table, this configuration maintains $b_a$ values at a consistent scale across all links in the network.

\setlength{\tabcolsep}{3pt}
\begin{table}[htbp]
  \centering
  \vspace{2pt}
  \caption{The values of $b_a$ in the Sioux-Falls network.}
  \small
    \begin{tabular}{ccccccccccccccccccccccccccc}
    \toprule
    Link  & 1     & 2     & 3     & 4     & 5     & 6     & 7     & 8     & 9     & 10    & 11    & 12    & 13    & 14    & 15    & \textcolor[rgb]{ 1,  0,  0}{16} & \textcolor[rgb]{ 1,  0,  0}{17} & 18    & \textcolor[rgb]{ 1,  0,  0}{19} & \textcolor[rgb]{ 1,  0,  0}{20} & 21    & 22    & 23    & 24    & \textcolor[rgb]{ 1,  0,  0}{25} & \textcolor[rgb]{ 1,  0,  0}{26} \\
    $b_a$    & 30    & 20    & 30    & 25    & 20    & 20    & 20    & 20    & 10    & 30    & 10    & 20    & 25    & 25    & 20    & \textcolor[rgb]{ 1,  0,  0}{26} & \textcolor[rgb]{ 1,  0,  0}{40} & 10    & \textcolor[rgb]{ 1,  0,  0}{26} & \textcolor[rgb]{ 1,  0,  0}{40} & 50    & 25    & 25    & 50    & \textcolor[rgb]{ 1,  0,  0}{25} & \textcolor[rgb]{ 1,  0,  0}{25} \\
    \midrule
          &       &       &       &       &       &       &       &       &       &       &       &       &       &       &       &       &       &       &       &       &       &       &       &       &       &  \\
    \midrule
    Link  & 27    & 28    & \textcolor[rgb]{ 1,  0,  0}{29} & 30    & 31    & 32    & 33    & 34    & 35    & 36    & 37    & 38    & \textcolor[rgb]{ 1,  0,  0}{39} & 40    & 41    & 42    & 43    & 44    & 45    & 46    & 47    & \textcolor[rgb]{ 1,  0,  0}{48} & 49    & 50    & 51    & 52 \\
    $b_a$    & 25    & 30    & \textcolor[rgb]{ 1,  0,  0}{48} & 40    & 30    & 25    & 30    & 20    & 20    & 30    & 15    & 15    & \textcolor[rgb]{ 1,  0,  0}{34} & 20    & 25    & 20    & 30    & 25    & 15    & 15    & 25    & \textcolor[rgb]{ 1,  0,  0}{48} & 10    & 15    & 40    & 10 \\
    \midrule
          &       &       &       &       &       &       &       &       &       &       &       &       &       &       &       &       &       &       &       &       &       &       &       &       &       &  \\
    \midrule
    Link  & 53    & 54    & 55    & 56    & 57    & 58    & 59    & 60    & 61    & 62    & 63    & 64    & 65    & 66    & 67    & 68    & 69    & 70    & 71    & 72    & 73    & \textcolor[rgb]{ 1,  0,  0}{74} & 75    & 76    &       &  \\
    $b_a$    & 10    & 10    & 15    & 20    & 15    & 10    & 20    & 20    & 20    & 30    & 25    & 30    & 10    & 15    & 15    & 25    & 10    & 20    & 20    & 20    & 10    & \textcolor[rgb]{ 1,  0,  0}{34} & 15    & 10    &       &  \\
    \bottomrule
    \end{tabular}%
  \label{tab:sf-setting}%
\end{table}%

Throughout the experiment, we set $\tau = 10$ and $\eta = 0.001$ for all links. Under this setting, the two reference points can be computed as $F_0 = 99.94$ and $F_{\text{so}} = 60.86$. Additionally, it is easy to see the optimal solution to CCBCEP \eqref{p} is bounded above by the known optimal value from \citet{suwansirikul1987equilibrium}'s benchmark instance --- 79.90, or 48.7\% at the relative scale \citep{li2022differentiable} --- as the instance solution is feasible to CCBCEP \eqref{p}. Given that the Sioux-Falls network contains 76 links, a brute-force search with $\tau = 10$ would require evaluating approximately $10^{12}$ combinations, making it computationally infeasible. Hence, we cannot offer the global optimal solution for comparison. For the tested algorithms, when implementing Algorithm \ref{alg:main}, we set $\rho_0 = 1$, $\theta_l = 1$, $\theta_u = 2$, $\sigma = 1.05$, and $\beta_{k} = (\theta_l + \theta_u) / \rho^k$; all alternative approaches are implemented as described earlier.
\begin{figure}[ht]
  \centering
    \includegraphics[width=0.45\textwidth]{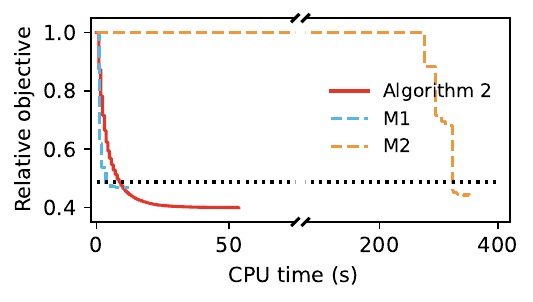} 
    \caption{Convergence trajectories of the tested algorithms on the Sioux-Falls network; the black dotted line indicates the relative objective of the best-known solution to \citet{suwansirikul1987equilibrium}'s instance, which serves as an upper bound for the new problem considered here.}
    \label{fig:sf}
\end{figure}
In Figure~\ref{fig:sf}, we plot the relative objective value of the best solution identified by each algorithm against the CPU time; as before, only solutions that satisfy the cardinality constraint at each stage of the run are evaluated. It can be checked that \textbf{M1} converges the fastest in terms of CPU time since it is equivalent to solving only one BCEP instance. However, this efficiency comes at the cost of accuracy: the resulting solution is the worst among all tested methods. \textbf{M2}, on the other hand, exhibits the slowest convergence, much longer than Algorithm \ref{alg:main}. A significant portion of its computation time is spent in the coarse search phase of $\alpha$, during which $\alpha$ progressively increases until the solution becomes sparse enough to satisfy the cardinality constraint for the first time. While \textbf{M2}'s final solution is an improvement over \textbf{M1}'s, it still underperforms against Algorithm~\ref{alg:main}.

\smallskip
To further assess the robustness of Algorithm \ref{alg:main}, we conduct additional experiments. 

\smallskip
\noindent
\textbf{Algorithmic parameter robustness.}
We first conduct two complementary sensitivity experiments on the Sioux–Falls network to assess the robustness of Algorithm~\ref{alg:main} to its internal parameters: perturb the initial penalty $\rho^{0}\in\{0.01, 0.1, 1, 10\}$;  perturb the penalty–update factor $\sigma \in \{1.01, 1.05, 1.25, 2\}$.
The corresponding convergence trajectories of the relative objective against CPU time are shown in Figure~\ref{fig:sf-pdc}-(a) and -(b). 
As shown in Figure~\ref{fig:sf-pdc}-(a), the choices $\rho^{0}=0.01, 0.1, 1$ yield very similar convergence behavior, both in speed and final objective. When $\rho^{0}$ is increased to $10$, however, progress slows and the algorithm settles at a worse relative objective. This phenomenon aligns with the theory of penalty methods in optimization, where an excessively large penalty parameter renders the problem more degenerate and consequently degrades computational performance and solution quality.
Figure~\ref{fig:sf-pdc}-(b) indicates that moderate update factors ($\sigma=1.01$ and $1.05$) drive a faster reduction of the relative objective for a given CPU budget. Increasing $\sigma$ to $1.25$ requires many more outer iterations to reach the same accuracy, while an aggressive choice ($\sigma=2$) converges much more slowly and attains a worse final objective. Overall, these tests suggest that Algorithm~\ref{alg:main} is robust to a broad range of $\rho^{0}$ and $\sigma$ values; only too large $\rho^{0}$ and $\sigma$ shall slow convergence and degrade solution quality.

\begin{figure}[ht]
\vskip 0.1in
\centering
\begin{subfigure}[b]{0.44\textwidth}
\includegraphics[width=0.85\columnwidth]{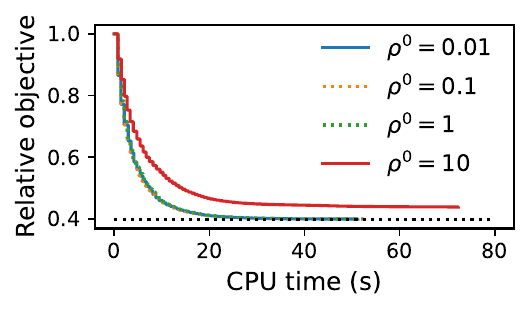}
\captionsetup{size=small}
\caption{{Perturbing $\rho^0$.}}
\label{fig:sf-s1}
\end{subfigure}
\begin{subfigure}[b]{0.44\textwidth}
\includegraphics[width=0.85\columnwidth]{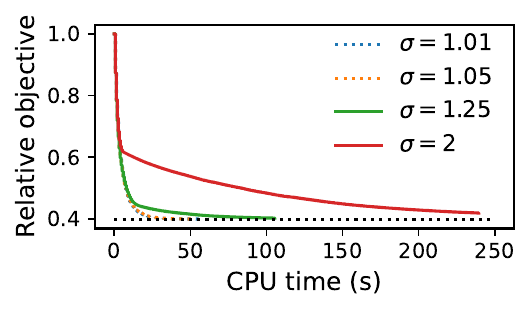}
\captionsetup{size=small}
\caption{Perturbing $\sigma$.}
\label{fig:sf-s2}
\end{subfigure}
\caption{Convergence trajectories of Algorithm \ref{alg:main} on the Sioux–Falls network under parameter perturbations; the dashed line marks the relative objective achieved by Algorithm \ref{alg:main} in the original setting.}
\label{fig:sf-pdc}
\end{figure}

\smallskip
\noindent
\textbf{Network parameter robustness.} We then generate additional scenarios by scaling $b_a$ for all links in $\gA \setminus \bar{\gA}$ by factors $\xi = 2, 3, 4, 5$. Table \ref{tab:sf-result} reports the performance of all tested algorithms, including the relative objective values of their returned solutions and the total CPU time required for convergence; results for $\xi = 1$ (the original setting) are also shown for reference. As observed, \textbf{M1} is again very fast as it is a heuristic, requiring solving only one classic BCEP; but Algorithm \ref{alg:main} requires slightly more CPU time than \textbf{M1} --- their times are still of the same order --- while being substantially faster than \textbf{M2}. More importantly, Algorithm \ref{alg:main} consistently delivers markedly better solutions than both \textbf{M1} and \textbf{M2} across all scenarios.

\setlength{\tabcolsep}{2.5pt}
\begin{table}[htbp]
  \centering
  \caption{Performance of \textbf{M1}, \textbf{M2}, and Algorithm \ref{alg:main} under different $\xi$ in the Sioux-Falls network.}
  \small
  \vspace{2pt}
    \begin{tabular}{crccrccrccrccrcc}
    \toprule
    \multirow{2}[4]{*}{Method} &       & \multicolumn{2}{c}{$\xi = 1$} &       & \multicolumn{2}{c}{$\xi = 2$} &       & \multicolumn{2}{c}{$\xi = 3$} &       & \multicolumn{2}{c}{$\xi = 4$} &       & \multicolumn{2}{c}{$\xi = 5$} \\
\cmidrule{3-4}\cmidrule{6-7}\cmidrule{9-10}\cmidrule{12-13}\cmidrule{15-16}          &       & objective & time (s) &       & objective & time (s) &       & objective & time (s) &       & objective & time (s) &       & objective & time (s) \\
    \midrule
    \textbf{M1}    &       & 46.9\% & 12    &       & 43.3\% & 18    &       & 40.0\% & 13    &       & 37.6\% & 23    &       & 35.7\% & 17 \\
    \textbf{M2}    &       & 44.3\% & 351   &       & 40.5\% & 201   &       & 39.6\% & 220   &       & 38.1\% & 234   &       & 36.3\% & 180 \\
    Algorithm \ref{alg:main}     &       & 39.9\% & 54    &       & 36.8\% & 35    &       & 34.8\% & 31    &       & 33.1\% & 66    &       & 31.9\% & 59 \\
    \bottomrule
    \end{tabular}%
  \label{tab:sf-result}%
\end{table}%

\subsection{Anaheim network}

In the final experiment, we turn to the Anaheim network. Having validated the effectiveness of Algorithm \ref{alg:main} against the two alternatives in the previous experiments, we now focus exclusively on Algorithm \ref{alg:main} to illustrate how it can be leveraged in a practical context.

The Anaheim network includes segments of an interstate highway (I-5) and several state highways (SR-22, SR-55, SR-57, and SR-91). In Figure \ref{fig:ana-highway}, these highways are highlighted in distinct colors, with the remaining urban roads shown in gray.
For all highway links, we set $b_a$ based on publicly available sources \citep{octa_i5_2025, caltrans_sr55_2022, dya_sr22_2025, usdotsr91_2025, ctc_0p670_2018}, and we assign higher values to connector roads and interchange ramps than to regular segments. In contrast, $b_a$ values for urban road links are set substantially higher than those for all highway links, reflecting the effects of higher land costs and more complex construction conditions. To further increase the necessity for capacity expansions, the travel demand for each OD pair is increased by four times relative to the original configuration.

\begin{figure}[ht]
    \centering
    \vspace{-10pt}
    \includegraphics[width=0.7\textwidth]{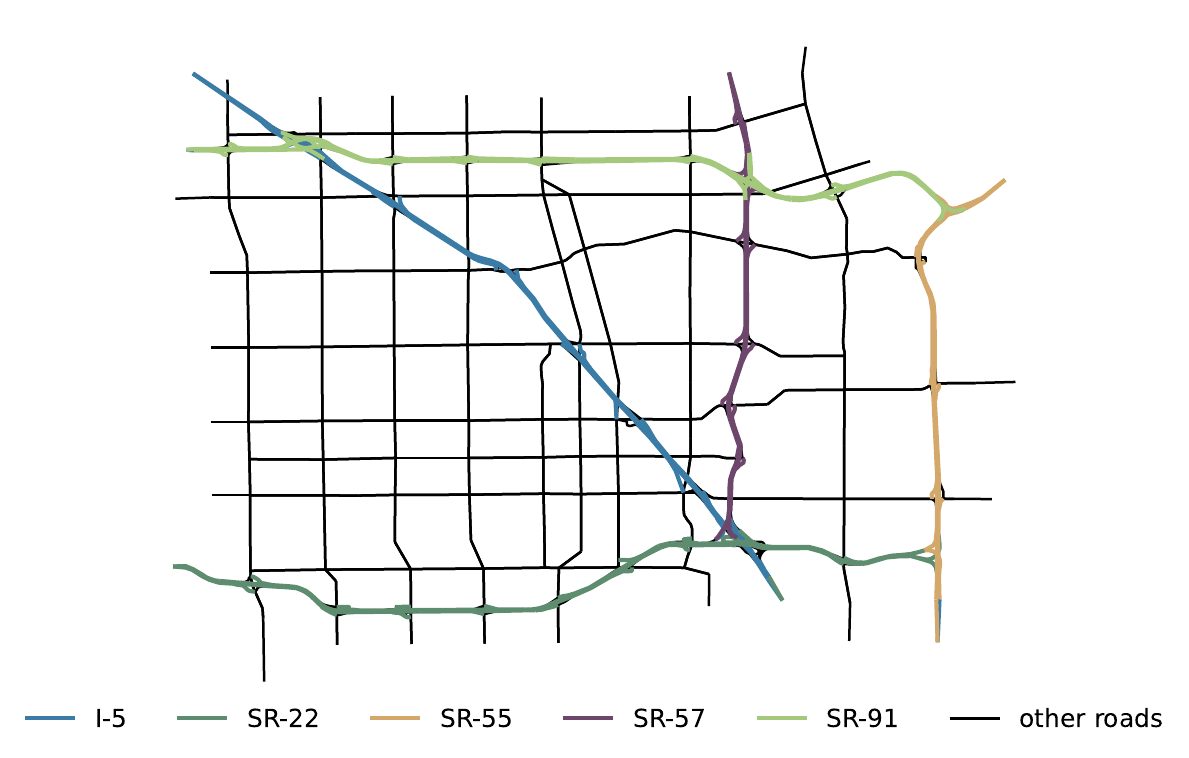} 
    \centering
    \vspace{5pt}
    \caption{An illustration of the Anaheim network.}
    \label{fig:ana-highway}
\end{figure}

Under this setting, we obtain $F_0 = 45.43$ and $F_{\text{so}} = 35.84$. We run Algorithm \ref{alg:main} for $\tau = 10, 15, 20, 25, 30, 35$. In Table \ref{tab:ana}, we report the relative objective of the solution achieved by Algorithm \ref{alg:main} given different $\tau$.
\begin{table}[htbp]
  \centering
  \caption{The relative objective corresponding to the solution achieved by Algorithm \ref{alg:main} given different $\tau$.}
  \vspace{5pt}
  \small
    \begin{tabular}{ccccccccc}
    \toprule
    $\tau$   & 5     & 10    & 15    & 20    & 25    & 30    & 35    & 40 \\
    \midrule
    relative objective & 33.5\% & 26.4\% & 22.4\% & 19.6\% & 16.9\% & 16.9\% & 15.8\% & 14.8\% \\
    \bottomrule
    \end{tabular}%
  \label{tab:ana}%
\end{table}%

The results indicate that increasing $\tau$ from 10 to 25 leads to a rapid decrease in the relative objective. However, when $\tau$ increases from 25 to 35, the improvement becomes marginal. At $\tau = 25$, the relative objective reaches $16.92\%$, which suggests that $\tau = 25$ offers a desirable balance between efficiency and the number of links expanded.  Figure \ref{fig:ana-result} further shows the expanded links identified by the algorithm for each $\tau$ (highlighted in red).
It indicates that the expanded links given $\tau = 25$, compared with those corresponding to larger values, exhibit strong spatial clustering, being concentrated in three regions: northeast, southeast, and southwest. Such spatial grouping enhances the practical feasibility of implementing the expansion plan, as construction efforts can be geographically focused.

\begin{figure}[ht]
    \captionsetup[subfigure]{font=small, skip=0pt} %
    \setlength{\belowcaptionskip}{-10pt} %
    \centering
    
    \begin{subfigure}[t]{0.3\textwidth} %
        \includegraphics[width=\linewidth]{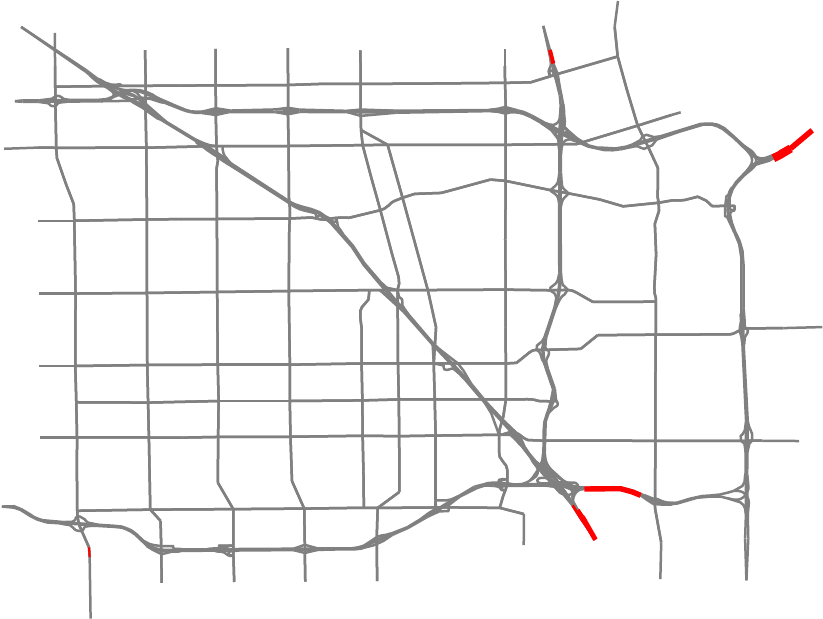}
        \caption{$\tau = 10$}
        \label{fig:10}
    \end{subfigure}
    \hfill
    \begin{subfigure}[t]{0.3\textwidth}
        \includegraphics[width=\linewidth]{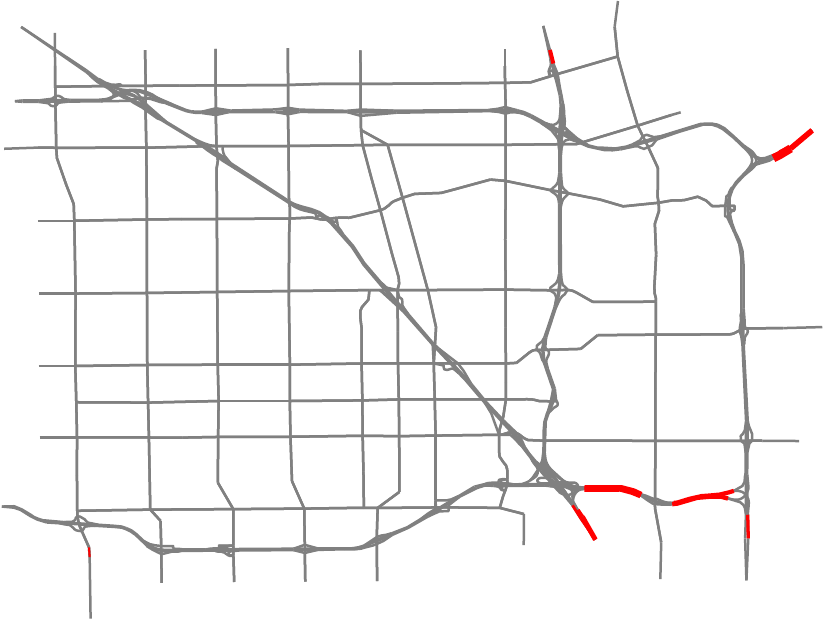}
        \caption{$\tau = 15$}
        \label{fig:15}
    \end{subfigure}
    \hfill
    \begin{subfigure}[t]{0.3\textwidth}
        \includegraphics[width=\linewidth]{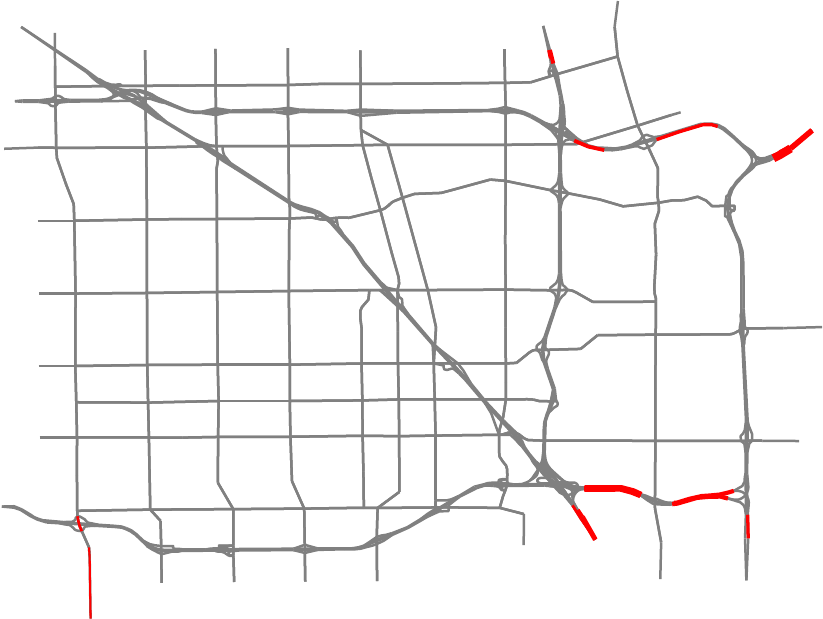}
        \caption{$\tau = 20$}
        \label{fig:20}
    \end{subfigure}
    
    \vspace{20pt} %
    
    \begin{subfigure}[t]{0.3\textwidth}
        \includegraphics[width=\linewidth]{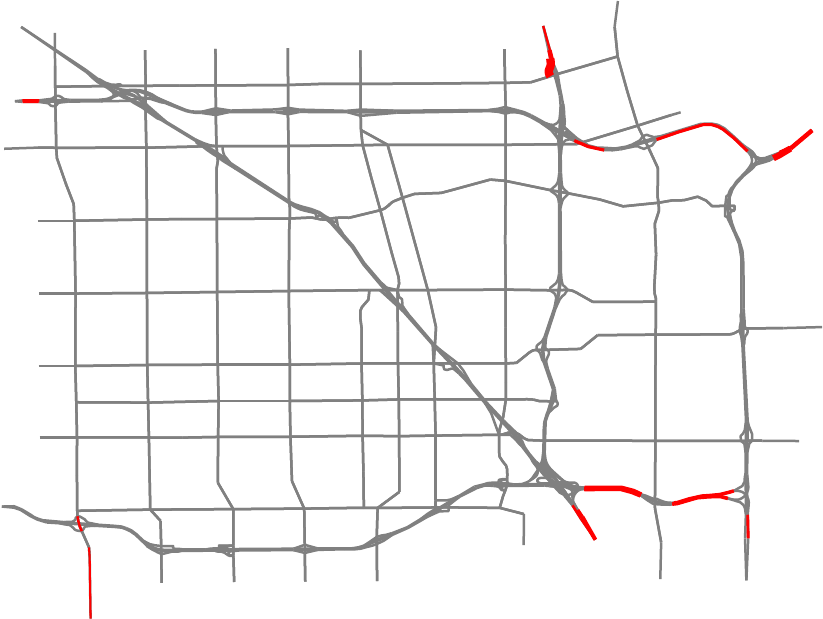}
        \caption{$\tau = 25$}
        \label{fig:25}
    \end{subfigure}
    \hfill
    \begin{subfigure}[t]{0.3\textwidth}
        \includegraphics[width=\linewidth]{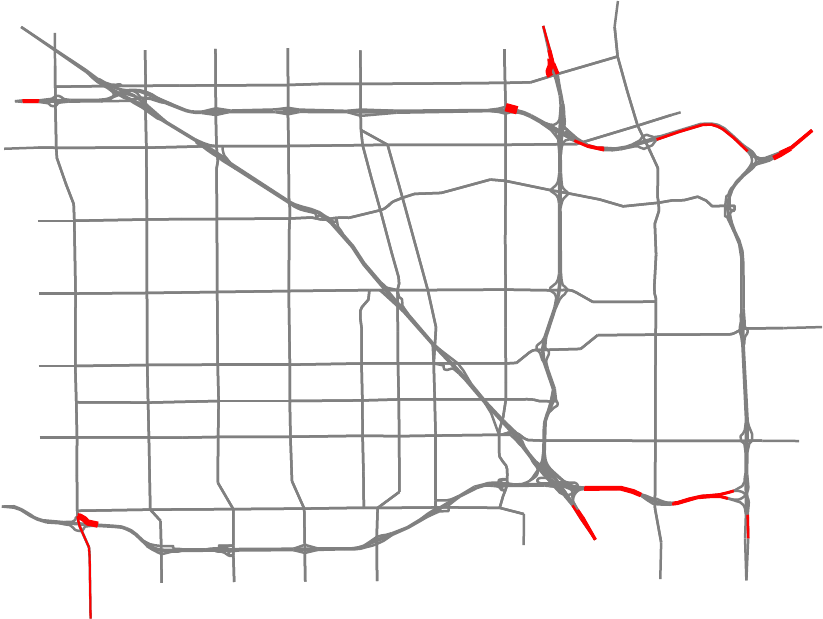}
        \caption{$\tau = 30$}
        \label{fig:30}
    \end{subfigure}
    \hfill
    \begin{subfigure}[t]{0.3\textwidth}
        \includegraphics[width=\linewidth]{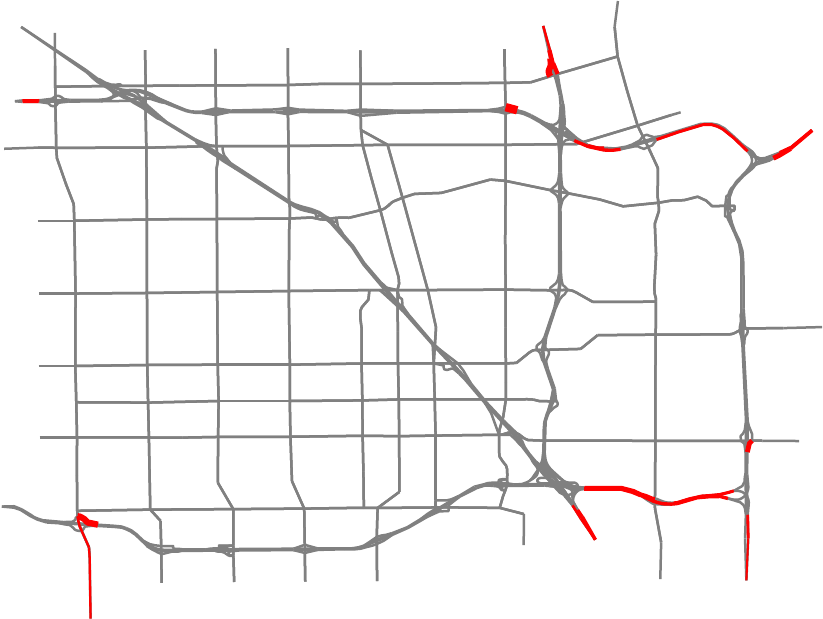}
        \caption{$\tau = 35$}
        \label{fig:35}
    \end{subfigure}

    \vspace{20pt}
    \caption{An illustration of the expanded links identified by Algorithm \ref{alg:main} given different maximum number of expanded links.}
    \label{fig:ana-result}
\end{figure}

\section{Conclusion}\label{sec:Conclusion}

As one of the most important problems in transportation and operations research, the bilevel capacity expansion problem (BCEP) has been extensively studied by researchers and practitioners for decades. In practical applications, a BCEP is often formulated under the assumption that a small set of links has been pre-determined for capacity expansion, so that the optimization focuses solely on determining the expansion levels for these links. However, this two-phase procedure --- first selecting the links to be expanded, then optimizing their capacities --- may lead to suboptimal solutions because it neglects the inherent interdependence between link selection and capacity expansion decisions. To address this challenge, our contributions can be summarized as follows.
\smallskip
\begin{itemize}[leftmargin=*]
    \item We introduced the \emph{cardinality-constrained} BCEP (CCBCEP) model, which incorporates the link selection decision directly into the bilevel formulation via an explicit cardinality constraint. This integrated approach enables simultaneous optimization of both the set of expanded links and their corresponding expansion levels, thereby capturing their interdependencies.

    \item We developed a tailored algorithmic framework to tackle the CCBCEP, which simultaneously confronts the discontinuity and non-convexity introduced by the cardinality constraint and the non-convexity arising from its bilevel structure --- a combination that cannot be directly addressed by any existing optimization toolbox.  {The core idea is to reformulate the CCBCEP as a sequence of partially penalized and linearized subproblems by exploiting the inherent DC structure of the formulation and the specific properties of the cardinality constraint set, while adaptively adjusting the penalty and regularization parameters to guarantee convergence.}

    \item We established the convergence of the proposed algorithm to an approximate Karush–Kuhn–Tucker (KKT) point of the CCBCEP with arbitrary prescribed accuracy. Notably, the convergence guarantee holds under standard regularity conditions and mild requirements on the hyperparameters, which can be readily satisfied in practice.
   
    \item We conducted a series of numerical experiments set up on practical networks to evaluate the performance of the proposed method against several alternative approaches, including sensitivity-based prescreening and $\ell_1$-regularization. The results demonstrate that our algorithm often outperforms these competing methods in terms of both solution quality and computational efficiency. In addition, the experiments show that, by explicitly incorporating a cardinality constraint, the proposed framework produces expansion plans that are not only effective in mitigating congestion but also practically implementable, as the number of links to be expanded is controlled.
    
\end{itemize}

\smallskip
The methodological and computational advances presented in this work open several promising research directions. {First, as a novel formulation in bilevel programming, the proposed CCBCEP model thus motivates considerable future research into its theoretical properties and algorithmic solutions.} Second, the framework can be extended to handle stochastic or robust variants of the capacity expansion problem, where demand and travel time functions are subject to uncertainty \citep{chen2011transport}. Third, we aim to adapt the framework to applications involving the design of dedicated facilities for connected and automated vehicles (CAVs), which has received considerable attention in recent years \citep{chen2016optimal,chen2017optimal,bahrami2020optimal}. Such extensions, however, pose additional challenges. For instance, our proposed algorithm leverages certain problem structures, such as the ability to reformulate the equilibrium constraint via a gap function with a DC structure. In the absence of such a structure, the algorithm may no longer be applicable.

\bibliographystyle{apalike} 
\bibliography{references}

\begin{thebibliography}{}

\bibitem[Abdulaal and LeBlanc, 1979]{abdulaal1979continuous}
Abdulaal, M. and LeBlanc, L.~J. (1979).
\newblock Continuous equilibrium network design models.
\newblock {\em Transportation Research Part B: Methodological}, 13(1):19--32.

\bibitem[Bagloee et~al., 2014]{bagloee2014heuristic}
Bagloee, S.~A., Ceder, A., Tavana, M., and Bozic, C. (2014).
\newblock A heuristic methodology to tackle the braess paradox detecting problem tailored for real road networks.
\newblock {\em Transportmetrica A: Transport Science}, 10(5):437--456.

\bibitem[Bahrami and Roorda, 2020]{bahrami2020optimal}
Bahrami, S. and Roorda, M.~J. (2020).
\newblock Optimal traffic management policies for mixed human and automated traffic flows.
\newblock {\em Transportation Research Part A: Policy and Practice}, 135:130--143.

\bibitem[Ban et~al., 2006]{ban2006general}
Ban, J.~X., Liu, H.~X., Ferris, M.~C., and Ran, B. (2006).
\newblock A general {MPCC} model and its solution algorithm for continuous network design problem.
\newblock {\em Mathematical and Computer Modelling}, 43(5-6):493--505.

\bibitem[Beckmann et~al., 1956]{beckmann1956studies}
Beckmann, M., McGuire, C., and Winsten, C. (1956).
\newblock {\em Studies in the economics of transportation}.
\newblock Yale University Press.

\bibitem[Boyd and Vandenberghe, 2004]{boyd2004convex}
Boyd, S. and Vandenberghe, L. (2004).
\newblock {\em Convex optimization}.
\newblock Cambridge University Press.

\bibitem[Braess, 1968]{braess1968paradoxon}
Braess, D. (1968).
\newblock {\"U}ber ein paradoxon aus der verkehrsplanung.
\newblock {\em Unternehmensforschung}, 12(1):258--268.

\bibitem[{California Department of Transportation}, 2022]{caltrans_sr55_2022}
{California Department of Transportation} (2022).
\newblock {SR-55} improvement project gets underway in {Orange County}.

\bibitem[{California Transportation Commission}, 2018]{ctc_0p670_2018}
{California Transportation Commission} (2018).
\newblock Baseline agreement: {SR-39} {Beach Boulevard} improvement project.

\bibitem[Chen et~al., 2011]{chen2011transport}
Chen, A., Zhou, Z., Chootinan, P., Ryu, S., Yang, C., and Wong, S. (2011).
\newblock Transport network design problem under uncertainty: a review and new developments.
\newblock {\em Transport Reviews}, 31(6):743--768.

\bibitem[Chen et~al., 2017]{chen2017optimal}
Chen, Z., He, F., Yin, Y., and Du, Y. (2017).
\newblock Optimal design of autonomous vehicle zones in transportation networks.
\newblock {\em Transportation Research Part B: Methodological}, 99:44--61.

\bibitem[Chen et~al., 2016]{chen2016optimal}
Chen, Z., He, F., Zhang, L., and Yin, Y. (2016).
\newblock Optimal deployment of autonomous vehicle lanes with endogenous market penetration.
\newblock {\em Transportation Research Part C: Emerging Technologies}, 72:143--156.

\bibitem[Chow et~al., 2010]{chow2010faster}
Chow, J.~Y., Regan, A.~C., and Arkhipov, D.~I. (2010).
\newblock Faster converging global heuristic for continuous network design using radial basis functions.
\newblock {\em Transportation Research Record}, 2196(1):102--110.

\bibitem[Colson et~al., 2007]{colson2007overview}
Colson, B., Marcotte, P., and Savard, G. (2007).
\newblock An overview of bilevel optimization.
\newblock {\em Annals of Operations Research}, 153(1):235--256.

\bibitem[Dafermos, 1980]{dafermos1980traffic}
Dafermos, S. (1980).
\newblock Traffic equilibrium and variational inequalities.
\newblock {\em Transportation Science}, 14(1):42--54.

\bibitem[Dafermos, 1983]{dafermos1983iterative}
Dafermos, S. (1983).
\newblock An iterative scheme for variational inequalities.
\newblock {\em Mathematical Programming}, 26(1):40--47.

\bibitem[Dafermos, 1988]{dafermos1988sensitivity}
Dafermos, S. (1988).
\newblock Sensitivity analysis in variational inequalities.
\newblock {\em Mathematics of Operations Research}, 13(3):421--434.

\bibitem[Dantzig et~al., 1979]{dantzig1979formulating}
Dantzig, G.~B., Harvey, R.~P., Lansdowne, Z.~F., Robinson, D.~W., and Maier, S.~F. (1979).
\newblock Formulating and solving the network design problem by decomposition.
\newblock {\em Transportation Research Part B: Methodological}, 13(1):5--17.

\bibitem[{Diaz-Yourman\ \&\ Associates}, 2025]{dya_sr22_2025}
{Diaz-Yourman\ \&\ Associates} (2025).
\newblock State {R}oute 22 design{-}build widening, {O}range {C}ounty, {CA}.

\bibitem[Friesz et~al., 1992]{friesz1992simulated}
Friesz, T.~L., Cho, H.-J., Mehta, N.~J., Tobin, R.~L., and Anandalingam, G. (1992).
\newblock A simulated annealing approach to the network design problem with variational inequality constraints.
\newblock {\em Transportation Science}, 26(1):18--26.

\bibitem[Friesz and Harker, 1985]{friesz1985properties}
Friesz, T.~L. and Harker, P.~T. (1985).
\newblock Properties of the iterative optimization-equilibrium algorithm.
\newblock {\em Civil Engineering Systems}, 2(3):142--154.

\bibitem[Friesz et~al., 1990]{friesz1990}
Friesz, T.~L., Tobin, R.~L., Cho, H.-J., and Mehta, N.~J. (1990).
\newblock Sensitivity analysis based heuristic algorithms for mathematical programs with variational inequality constraints.
\newblock {\em Mathematical Programming}, 48:265--284.

\bibitem[Gairing et~al., 2017]{gairing2017complexity}
Gairing, M., Harks, T., and Klimm, M. (2017).
\newblock Complexity and approximation of the continuous network design problem.
\newblock {\em SIAM Journal on Optimization}, 27(3):1554--1582.

\bibitem[Griewank, 1989]{griewank1989automatic}
Griewank, A. (1989).
\newblock On automatic differentiation.
\newblock In Iri, M. and Tanabe, K., editors, {\em Mathematical programming: Recent developments and applications}, pages 83--108. Springer.

\bibitem[Guo et~al., 2024]{guo2024cardinality}
Guo, L., Li, J., Nie, Y.~M., and Xie, J. (2024).
\newblock A cardinality-constrained approach to combinatorial bilevel congestion pricing.
\newblock {\em arXiv preprint arXiv:2412.06482}.

\bibitem[Guo et~al., 2025]{guo2025penalized}
Guo, L., Yin, H., and Zhang, J. (2025).
\newblock A penalized sequential convex programming approach for continuous network design problems.
\newblock {\em INFORMS Journal on Computing}, Ahead of Print.

\bibitem[Jayakrishnan et~al., 1994]{jayakrishnan1994faster}
Jayakrishnan, R., Tsai, W.~K., Prashker, J.~N., and Rajadhyaksha, S. (1994).
\newblock A faster path-based algorithm for traffic assignment.
\newblock {\em Transportation Research Record}, 1443:75--83.

\bibitem[Josefsson and Patriksson, 2007]{josefsson-Patriksson2007}
Josefsson, M. and Patriksson, M. (2007).
\newblock Sensitivity analysis of separable traffic equilibrium equilibria with application to bilevel optimization in network design.
\newblock {\em Transportation Research Part B: Methodological}, 41(1):4--31.

\bibitem[Leblanc, 1975]{leblanc1975algorithm}
Leblanc, L.~J. (1975).
\newblock An algorithm for the discrete network design problem.
\newblock {\em Transportation Science}, 9(3):183--199.

\bibitem[Li et~al., 2012]{li2012}
Li, C., Yang, H., Zhu, D., and Meng, Q. (2012).
\newblock A global optimization method for continuous network design problems.
\newblock {\em Transportation Research Part B: Methodological}, 46(9):1144--1158.

\bibitem[Li et~al., 2024]{li2024wardrop}
Li, J., Wang, Z., and Nie, Y. (2024).
\newblock Wardrop equilibrium can be boundedly rational: A new behavioral theory of route choice.
\newblock {\em Transportation Science}, 58(5):973--994.

\bibitem[Li et~al., 2020]{li2020end}
Li, J., Yu, J., Nie, Y.~M., and Wang, Z. (2020).
\newblock End-to-end learning and intervention in games.
\newblock In {\em Advances in Neural Information Processing Systems}, volume~33, pages 16653--16665.

\bibitem[Li et~al., 2022]{li2022differentiable}
Li, J., Yu, J., Wang, Q., Liu, B., Wang, Z., and Nie, Y.~M. (2022).
\newblock Differentiable bilevel programming for {S}tackelberg congestion games.
\newblock {\em arXiv preprint arXiv:2209.07618}.

\bibitem[Lim, 2002]{lim2002transportation}
Lim, A.~C. (2002).
\newblock {\em Transportation network design problems: An MPEC approach}.
\newblock The Johns Hopkins University.

\bibitem[Marcotte, 1981]{universite1981design}
Marcotte, P. (1981).
\newblock {\em Design optimal d'un r{\'e}seau de transport en pr{\'e}sence d'effets de congestion}.
\newblock PhD thesis, Universit{\'e} de Montr{\'e}al.

\bibitem[Marcotte, 1983]{marcotte1983}
Marcotte, P. (1983).
\newblock Network optimization with continuous control parameters.
\newblock {\em Transportation Science}, 17(2):181--197.

\bibitem[Marcotte, 1986]{marcotte1986network}
Marcotte, P. (1986).
\newblock Network design problem with congestion effects: A case of bilevel programming.
\newblock {\em Mathematical Programming}, 34(2):142--162.

\bibitem[Marcotte and Marquis, 1992]{marcotte1992efficient}
Marcotte, P. and Marquis, G. (1992).
\newblock Efficient implementation of heuristics for the continuous network design problem.
\newblock {\em Annals of Operations Research}, 34(1):163--176.

\bibitem[McKenzie et~al., 2024]{mckenzie2024three}
McKenzie, D., Heaton, H., Li, Q., Wu~Fung, S., Osher, S., and Yin, W. (2024).
\newblock Three-operator splitting for learning to predict equilibria in convex games.
\newblock {\em SIAM Journal on Mathematics of Data Science}, 6(3):627--648.

\bibitem[Meng et~al., 2001]{meng2001equivalent}
Meng, Q., Yang, H., and Bell, M.~G. (2001).
\newblock An equivalent continuously differentiable model and a locally convergent algorithm for the continuous network design problem.
\newblock {\em Transportation Research Part B: Methodological}, 35(1):83--105.

\bibitem[Nocedal and Wright, 1999]{nocedal1999numerical}
Nocedal, J. and Wright, S.~J. (1999).
\newblock {\em Numerical optimization}.
\newblock Springer.

\bibitem[{Orange County Transportation Authority}, 2025]{octa_i5_2025}
{Orange County Transportation Authority} (2025).
\newblock I{-}5 south county improvements project.

\bibitem[Park, 2011]{park2011detecting}
Park, K. (2011).
\newblock Detecting braess paradox based on stable dynamics in general congested transportation networks.
\newblock {\em Networks and Spatial Economics}, 11:207--232.

\bibitem[Patriksson, 2004]{patriksson2004sensitivity}
Patriksson, M. (2004).
\newblock Sensitivity analysis of traffic equilibria.
\newblock {\em Transportation Science}, 38(3):258--281.

\bibitem[Rockafellar and Wets, 2009]{rockafellar2009}
Rockafellar, R.~T. and Wets, R. J.-B. (2009).
\newblock {\em Variational analysis}, volume 317.
\newblock Springer Science \& Business Media.

\bibitem[Roughgarden, 2005]{roughgarden2005selfish}
Roughgarden, T. (2005).
\newblock {\em Selfish routing and the price of anarchy}.
\newblock MIT press.

\bibitem[Suwansirikul et~al., 1987]{suwansirikul1987equilibrium}
Suwansirikul, C., Friesz, T.~L., and Tobin, R.~L. (1987).
\newblock Equilibrium decomposed optimization: A heuristic for the continuous equilibrium network design problem.
\newblock {\em Transportation Science}, 21(4):254--263.

\bibitem[Tan et~al., 1979]{tan1979hybrid}
Tan, H.-N., Gershwin, S.~B., and Athans, M. (1979).
\newblock Hybrid optimization in urban traffic networks.
\newblock Technical report, Massachusetts Institute of Technology.

\bibitem[Tibshirani, 1996]{tibshirani1996regression}
Tibshirani, R. (1996).
\newblock Regression shrinkage and selection via the lasso.
\newblock {\em Journal of the Royal Statistical Society Series B: Statistical Methodology}, 58(1):267--288.

\bibitem[Tobin and Friesz, 1988]{tobin1988sensitivity}
Tobin, R.~L. and Friesz, T.~L. (1988).
\newblock Sensitivity analysis for equilibrium network flow.
\newblock {\em Transportation Science}, 22(4):242--250.

\bibitem[{Transportation Networks for Research Core Team}, 2023]{TransportationNetworks}
{Transportation Networks for Research Core Team} (2023).
\newblock {Transportation Networks for Research.}

\bibitem[{\rm Transportation Networks for Research Core Team}, 2023]{networks-github}
{\rm Transportation Networks for Research Core Team} (2023).
\newblock Transportation networks for research.
\newblock \url{https://github.com/bstabler/TransportationNetworks}.
\newblock {A}ccessed October 1, 2023.

\bibitem[{U.S. Department of Transportation}, 2025]{usdotsr91_2025}
{U.S. Department of Transportation} (2025).
\newblock {SR-91} corridor improvement.

\bibitem[Wang and Lo, 2010]{wang2010}
Wang, D.~Z. and Lo, H.~K. (2010).
\newblock Global optimum of the linearized network design problem with equilibrium flows.
\newblock {\em Transportation Research Part B: Methodological}, 44(4):482--492.

\bibitem[Wang et~al., 2013]{wang2013}
Wang, S., Meng, Q., and Yang, H. (2013).
\newblock Global optimization methods for the discrete network design problem.
\newblock {\em Transportation Research Part B: Methodological}, 50:42--60.

\bibitem[Wardrop, 1952]{wardrop1952road}
Wardrop, J.~G. (1952).
\newblock Some theoretical aspects of road traffic research.
\newblock {\em Proceedings of the Institute of Civil Engineers}, 1(3):325--378.

\bibitem[Xie et~al., 2018]{xie2018greedy}
Xie, J., Nie, Y., and Liu, X. (2018).
\newblock A greedy path-based algorithm for traffic assignment.
\newblock {\em Transportation Research Record}, 2672(48):36--44.

\bibitem[Yang and Bell, 2001]{yang2001transport}
Yang, H. and Bell, M.~G. (2001).
\newblock Transport bilevel programming problems: Recent methodological advances.

\bibitem[Yang and Bell, 2007]{yang2007sensitivity}
Yang, H. and Bell, M.~G. (2007).
\newblock Sensitivity analysis of network traffic equilibrium revisited: The corrected approach.
\newblock In {\em Proceedings of the 4th IMA International Conference on Mathematics in Transport}.

\bibitem[Yang and Huang, 2005]{yang2005mathematical}
Yang, H. and Huang, H.-J. (2005).
\newblock {\em Mathematical and economic theory of road pricing}.
\newblock Emerald Group Publishing Limited.

\bibitem[Yu et~al., 2025]{yu2025scalable}
Yu, J., Wang, Q., Nie, M., and Li, J. (2025).
\newblock A scalable descent algorithm for network design problems.
\newblock {\em Available at SSRN 5371573}.

\end{thebibliography}

\newpage
\appendix
\section{Omitted Proofs}

\subsection{Proof of Proposition \ref{prop-cardiset}}
\label{app:prop-cardiset}

First, following the definition of $\Xi_\tau$, given any $\gI \in \Xi_{\tau}$, a $\vy \in \gY$ with $\evy_a = 0$ ($\forall a \in \gA \setminus \gI$) would at most have $\tau$ positive elements. Thus, we have
\[
\bigcup_{\gI\in \Xi_\tau}\left\{\vy\in \gY: \evy_a=0, \ a \in \gA \setminus \gI\right\} \subseteq \Upsilon_\tau.
\]
Meanwhile, for any $\bar{\vy} \in \Upsilon_\tau$, it follows that $\bar{\vy} \in \gY$ and $\supp(\bar{\vy}) \leq \tau$. Letting $\gJ = \{i: \bar{\evy}_i \neq 0\}$, it then holds $|\gJ|\leq \tau$. Choosing a subset $\gI \subseteq \gA$ with $|\gI| = \tau$ such that $\gJ \subseteq \gI$, it then holds $\bar{\vy} \in \{\vy:\evy_a=0, \  \ a\in \gA\setminus \gI\}$, which indicates that
\[
\Upsilon_\tau \subseteq \bigcup_{\gI\in \Xi_\tau}\left\{\vy\in \gY: \evy_a=0, \ a\in \gA\setminus \gI\right\}. 
\]
Combining both then concludes the proof. \hfill\QED

\subsection{Proof of Proposition \ref{prop:closedsolution}} 
\label{app:closedsolution}

Without loss of generality, we assume that $\evy_a^*>0$ for all $a\in \gA$. By the definition of $\bar{\vy}^*$, it follows that $\bar{\vy}^*\in \Upsilon_\tau$. To further prove that it is an optimal solution to Problem \eqref{prop-opt}, it suffices to show that 
\begin{equation}\label{ineq:suff}
\sum_{a\in \gA} \phi_a(\bar{y}^*_a) \leq \sum_{a\in \gA} \phi_a(y_a), \quad \forall \vy\in \Upsilon_\tau.
\end{equation}
For any $\evy = (\evy_a)_{a \in \gA} \in \Upsilon_\tau$, let us denote
$
\gI = \{a \in \gA: \evy_a>0\} \quad \text{and} \quad \gI^c=\{a\in \gA: \evy_a=0\}.
$
Writing $\bar{\gA}^c = \gA \setminus \bar{\gA}$, we can then divide $\gA$ into
$
\gA = (\bar{\gA}\cap \gI) \cup  (\bar{\gA}\cap \gI^c) \cup  (\bar{\gA}^c\cap \gI) \cup (\bar{\gA}^c\cap \gI^c).
$
First, for all $a\in \bar{\gA}=(\bar{\gA}\cap \gI) \cup  (\bar{\gA}\cap \gI^c)$, it follows from the optimality of $y_a^*$ that
\begin{equation}\label{inequ1}
\phi_a(\bar{y}^*_a) = \phi_a({y}^*_a) \leq \phi_a(y_a).
\end{equation}
Second, for all $a\in \bar{\gA}^c\cap \gI^c$, it follows that $\bar{\evy}^*_a=\evy_a=0$ and thus
\begin{equation}\label{inequ2}
\phi_a(\bar{\evy}^*_a)  = \phi_a(0) = \phi_a(\evy_a).
\end{equation}
Last, by the definitions of $\gI^c$ and $\bar{\gA}$, we have
\begin{eqnarray*}
     |\gA| - \tau =  |\bar{\gA}^c| = |\bar{\gA}^c\cap \gI| + |\bar{\gA}^c\cap \gI^c| \quad \text{and} \quad |\gA|-\tau \leq |\gI^c| = |\bar{\gA}^c\cap \gI^c| + |\bar{\gA}\cap \gI^c|,
\end{eqnarray*}
which further implies that $|\bar{\gA}\cap \gI^c| \geq |\bar{\gA}^c\cap \gI|$. For all $a\in \bar{\gA}^c\cap \gI$ and $a'\in \bar{\gA}\cap \gI^c$, we then have
\begin{equation}
\begin{split}
    &(\phi_a(\bar{\evy}^*_a) - \phi_a({\evy}_a)) + (\phi_{a'}(\bar{\evy}^*_{a'}) - \phi_{a'}({\evy}_{a'}))
    = (\phi_a(0) - \phi_a({\evy}_a)) + (\phi_{a'}({\evy}^*_{a'}) - \phi_{a'}(0))\\
    &\qquad \leq  (\phi_a(0) - \phi_a({\evy}_a)) +  (\phi_{a}({\evy}^*_{a}) - \phi_{a}(0))
    = \phi_{a}({\evy}^*_{a})- \phi_a({\evy}_a))
    \leq 0,\label{inequ3}
\end{split}
\end{equation}
where the first equality follows from the definitions of $\gI^c$ and $\bar{\vy}^*$, the first inequality follows from the definition of $\bar{\gA}$, and the last inequality follows from the optimality of ${\evy}^*_{a}$. Equation\eqref{inequ3} then implies 
\begin{equation}\label{inequ4}
\sum_{a\in (\bar{\gA}^c\cap \gI) \cup (\bar{\gA}\cap \gI^c)} \phi_a(\bar{\evy}^*_a) \leq \sum_{a\in (\bar{\gA}^c\cap \gI) \cup (\bar{\gA}\cap \gI^c)}  \phi_a(\evy_a).
\end{equation}
Eventually, combining Equations \eqref{inequ1}, \eqref{inequ2}, and \eqref{inequ4} then yields Equation \eqref{ineq:suff}. \hfill\QED

\subsection{Proof of Theorem \ref{thm:ama}} 
\label{app:ama}

(i) 
To begin, note that by the design of Algorithm~\ref{alg:ama}, the following inequalities hold for all $j \geq 0$:
\begin{eqnarray}
&& \Psi^k(\vy^{k, j},\vv^{k, j + 1}) \leq \Psi^k(\vy^{k, j}, \vv),\quad \forall \vv\in \gV,\label{th:eq1}\\
&& \Psi^k(\vy^{k, j + 1},\vv^{k, j + 1}) \leq \Psi^k(\vy,\vv^{k, j + 1}),\quad \forall \vy\in \Upsilon_\tau.\label{th:eq2}
\end{eqnarray}
Since $\vy^{k, j} \in \Upsilon_\tau$ and $\vv^{k, j} \in \gV$, the above two inequalities imply that
\begin{equation}\label{th:eq3}
\Psi^k(\vy^{k, j + 1}, \vv^{k, j + 1}) \leq \Psi^k(\vy^{k, j}, \vv^{k, j + 1}) \leq \Psi^k(\vy^{k, j}, \vv^{k, j}), \quad \forall j \geq 0.
\end{equation}
We now show that $\Psi^k(\vy^{k, j + 1}, \vv^{k, j + 1}) < \Psi^k(\vy^{k, j}, \vv^{k, j})$ must hold. 

Suppose, to the contrary, that $\Psi^k(\vy^{k, j + 1}, \vv^{k, j + 1}) = \Psi^k(\vy^{k, j}, \vv^{k, j})$. Then, by \eqref{th:eq3}, we have $$\Psi^k(\vy^{k, j + 1}, \vv^{k, j + 1}) = \Psi^k(\vy^{k, j}, \vv^{k, j + 1}) = \Psi^k(\vy^{k, j}, \vv^{k, j}).$$
Substituting $\Psi^k(\vy^{k, j}, \vv^{k, j + 1})$ with $\Psi^k(\vy^{k, j}, \vv^{k, j})$ in \eqref{th:eq1} yields
\begin{equation}\label{th:eq5}
\Psi^k(\vy^{k, j}, \vv^{k, j}) \leq \Psi^k(\vy^{k, j}, \vv), \quad \forall \vv \in \gV,
\end{equation}
which indicates that $\vv^{k, j}$ is also an optimal solution of Problem~\eqref{alg:eq1}. Since Problem~\eqref{alg:eq1} has a unique solution --- thanks to the strong convexity of its objective function and the convexity of the constraint set $\gV$ --- it follows that $\vv^{k, j + 1} = \vv^{k, j}$. Substituting $\Psi^k(\vy^{k, j + 1}, \vv^{k, j + 1})$ with $\Psi^k(\vy^{k, j}, \vv^{k, j})$ in \eqref{th:eq2} and using $\vv^{k, j + 1} = \vv^{k, j}$, we obtain
\[
\Psi^k(\vy^{k, j},\vv^{k, j}) \leq \Psi^k(\vy, \vv^{k, j}),\quad \forall \vy\in \Upsilon_\tau
\]
Together with Equation~\eqref{th:eq5}, this shows that $(\vy^{k, j}, \vv^{k, j})$ is a partially optimal solution. By the termination criterion in Step (iii) of Algorithm~\ref{alg:ama}, $(\vy^{k, j}, \vv^{k, j})$ would then be the returned solution and the algorithm would terminate. This contradicts the assumption that ${(\vy^{k, j}, \vv^{k, j})}$ is an infinite sequence. Therefore, the sequence $\{\Psi^k(\vy^{k, j}, \vv^{k, j})\}_{j=1}^\infty$ is strictly monotonically decreasing. Moreover, as it is bounded below due to the compactness of both $\Upsilon_\tau$ and $\gV$, it converges to a unique limit.

(ii) Since both $\Upsilon_\tau$ and $\gV$ are compact sets, the iteration sequence ${(\vy^{k, j}, \vv^{k, j})}_{j=1}^\infty$ is bounded and therefore admits a convergent subsequence by the Bolzano–Weierstrass theorem. Consequently, at least one accumulation point exists.

(iii) Assume that $(\vy^*,\vv^*)$ is an accumulation point and $\gJ \subseteq \sN$ is the index set such that $\{(\vy^{k, j}, \vv^{k, j})\}_{j\in \gJ}$ converges to $(\vy^*,\vv^*)$ as $j\to \infty$. By the compactness of both $\Upsilon_\tau$ and $\gV$, the sequence $\{(\vy^{k, j + 1}, \vv^{k, j + 1})\}_{j \in \gJ}$ is bounded, which implies that it contains a convergent subsequence. Let $\gJ_1 \subseteq \gJ$ denote the index set such that the subsequence $\{(\vy^{k, j + 1}, \vv^{k, j + 1})\}_{j \in J_1}$ converges to $(\bar{\vy}, \bar{\vv})$. By the relationship between $(\vy^{k, j + 1}, \vv^{k, j + 1})$ and $(\vy^{k, j}, \vv^{k, j})$, it follows that for all $j \in \gJ_1$,
\begin{eqnarray*}
&& \Psi^k(\vy^{k, j}, \vv^{k, j + 1}) \leq \Psi^k(\vy^{k, j}, \vv),\quad \forall \vv\in \gV,\\
&& \Psi^k(\vy^{k, j + 1}, \vv^{k, j + 1}) \leq \Psi^k(\vy, \vv^{k, j + 1}),\quad \forall \vy\in \Upsilon_\tau.
\end{eqnarray*}
Taking limits as $j \to \infty$ along the subsequence $\gJ_1$, we obtain
\begin{eqnarray*}
&& \Psi^k(\vy^*, \bar{\vv}) \leq \Psi^k(\vy^*, \vv),\quad \forall \vv \in \gV,\\
&& \Psi^k(\bar{\vy},\bar{\vv}) \leq \Psi^k(\vy,\bar{\vv}),\quad \forall \vy \in \Upsilon_\tau.
\end{eqnarray*}
Since $\vy^* \in \Upsilon_\tau$ and $\vv^* \in \gV$ by the closedness of these sets, it follows that
 \[
 \Psi^k(\bar{\vy}, \bar{\vv}) \leq \Psi^k(\vy^*,\bar{\vv})\leq \Psi^k(\vy^*, \vv^*).
 \]
As discussed, the sequence $\{\Psi^k(\vy^{k, j}, \vv^{k, j})\}_{j=1}^\infty$ is monotonically decreasing and converges to a unique limit. Therefore, we must have $\Psi^k(\bar{\vy},\bar{\vv}) = \Psi^k(\vy^*,\vv^*)$, and consequently
\[
\Psi^k(\bar{\vy},\bar{\vv}) = \Psi^k(\vy^*,\bar{\vv})=\Psi^k(\vy^*,\vv^*).
\]
Following a similar argument as in the first part of the proof, we then obtain $\vv^* = \bar{\vv}$ and
\begin{eqnarray*}
&& \Psi^k(\vy^*,\vv^*) \leq \Psi^k(\vy^*,v),\quad \forall \vv\in \gV,\\
&& \Psi^k(\vy^*,\vv^*) \leq \Psi^k(y,\vv^*),\quad \forall \vy\in \Upsilon_\tau,
\end{eqnarray*}
which indicates that $(\vy^*,\vv^*)$ is a partially optimal solution of Problem $\text{PPLA}_k$. \hfill\QED

\subsection{Proof of Proposition \ref{prop:estimate}}
\label{app:estimate}

By Theorem \ref{thm:ama}-(i), it follows that 
\begin{equation}
    \Psi^k(\vy^{k+1}, \vv^{k+1}) < \Psi^k(\vy^{k,0}, \vv)=\Psi^k(\vy^k, \vv), \quad \forall \vv \in \gV.
    \label{t1-1}
\end{equation}
Setting $\vv$ be the unique $\vv^* \in \gV^*(\vy^k)$ in the above inequality, we then obtain
\begin{equation}
    \Psi^k(\vy^{k+1}, \vv^{k+1}) <  \Psi^k(\vy^k, \vv^*).
    \label{eq:p5-1}
\end{equation}
Noting that $\vv^* \in \gV^*(\vy^k)$ implies that $g(\vy^k) = f(\vv^*; \vy^k)$, we then have
\begin{equation*}
    \Phi(\vy^k, \vv^*; \vy^k, \vv^k) = f(\vv^*; \vy^k) - [g(\vy^k) + \nabla g(\vy^k)^{\rm T} \cdot (\vy^k - \vy^k)] = f(\vv^*; \vy^k) - g(\vy^k) = 0,
\end{equation*}
and accordingly, by the definition of $\Psi^k(\vy^k, \vv^*)$,
\begin{equation}
\begin{split}
    \Psi^k(\vy^k, \vv^*) &= F(\vy^k, \vv^*) + \rho^k \cdot \Phi(\vy^k, \vv^*; \vy^k, \vv^k) + \rho^k \cdot \beta^k \cdot \|(\vy^k - \vy^k, \vv^* - \vv^k) \|_2^2 \\
    &= F(\vy^k, \vv^*) + \rho^k \cdot \beta^k \cdot \|\vv^* - \vv^k \|_2^2
    \label{eq:p5-2}
\end{split}
\end{equation}
Combining Equation \eqref{eq:p5-1} and \eqref{eq:p5-2} then yields 
\begin{equation}
\begin{split}
    \Psi^k(\vy^{k+1}, \vv^{k+1}) < F(\vy^k, \vv^*) + \rho^k \cdot \beta^k \cdot \|\vv^* - \vv^k \|_2^2 \leq b_u + \theta_u \cdot c_u,\label{righthand}
\end{split}
\end{equation}
where the second inequality follows from the definitions of $b_u$ and $c_u$ and Condition \eqref{condition}.

Meanwhile, by the definitions of $\Psi^k(\vy^{k+1}, \vv^{k+1})$ and $b_u$, it can be obtained that
\begin{eqnarray*}\label{legthand}
        \Psi^k(\vy^{k+1}, \vv^{k+1}) \geq F(\vy^{k+1}, \vv^{k+1}) + \rho^k \cdot \Phi(\vy^{k+1}, \vv^{k+1}; \vy^k,\vv^k)\geq b_l + \rho^k \cdot \Phi(\vy^{k+1}, \vv^{k+1}; \vy^k,\vv^k),
\end{eqnarray*}
which, together with Equation \eqref{righthand}, yields
\[
    \Phi(\vy^{k+1}, \vv^{k+1}; \vy^k, \vv^k)\leq \frac{b_u + \theta_u \cdot c_u -b_l}{\rho^k}.
\]
Eventually, by Condition \eqref{condition}, we have $\Phi(\vy^{k+1}, \vv^{k+1}; \vy^k, \vv^k) \leq \epsilon$, which concludes the proof. 
\hfill\QED

\subsection{Proof of Proposition \ref{prop:decrease}}
\label{app:decrease}

First, note that Equation~\eqref{t1-1} applies in this setting. Setting $\vv = \vv^k$ then yields
\begin{equation}
\Psi^k(\vy^{k+1}, \vv^{k+1}) < \Psi^k(\vy^k, \vv^k).
\label{eq:p6-m0}
\end{equation}

(i) By the definition of $\Psi^k(\vy^{k+1}, \vv^{k+1})$, we have
\begin{equation}
\begin{split}
    \Psi^k(\vy^{k+1}, \vv^{k+1}) - \rho^k \cdot \beta^k \cdot &\|(\vy^{k+1} - \vy^k, \vv^{k+1} - \vv^k) \|_2^2 \\
    &= F(\vy^{k+1}, \vv^{k+1}) + \rho^k \cdot \Phi(\vy^{k+1}, \vv^{k+1}; \vy^k, \vv^k).
\end{split}
\label{eq:p6-1}
\end{equation}
Meanwhile, setting $\vy = \vy^{k + 1}$ and $\vv = \vv^{k + 1}$ in Equation \eqref{eq:phi-Phi} yields
\begin{equation}
\begin{split}
\varphi(\vv^{k + 1}; \vy^{k + 1}) &\leq \Phi(\vy^{k + 1}, \vv^{k + 1}; \vy^k, \vv^k).
\end{split}
\label{eq:p6-2}
\end{equation}
Combining Equations \eqref{eq:p6-1} and \eqref{eq:p6-2}, we then obtain that
\begin{equation}
\begin{split}
    \psi(\vy^{k+1}, \vv^{k+1}; \rho^k) &= F(\vy^{k+1}, \vv^{k+1}) + \rho^k \cdot \varphi(\vv^{k + 1}; \vy^{k + 1}) \\
    &\leq F(\vy^{k+1}, \vv^{k+1}) + \rho^k \cdot \Phi(\vy^{k + 1}, \vv^{k + 1}; \vy^k, \vv^k) \\
    &= \Psi^k(\vy^{k+1}, \vv^{k+1}) - \rho^k \cdot \beta^k \cdot \|(\vy^{k+1} - \vy^k, \vv^{k+1} - \vv^k) \|_2^2.
    \label{eq:p6-m1}
\end{split}
\end{equation}

(ii) By the definition of $\Psi^k(\vy^k, \vv^k)$, we have
\begin{equation}
    \Psi^k(\vy^k, \vv^k) = F(\vy^k, \vv^k) + \rho^k \cdot \Phi(\vy^k, \vv^k; \vy^k,\vv^k) + \rho^k \cdot \beta^k \cdot \|(\vy^k-\vy^k, \vv^k-\vv^k)\|_2^2.
    \label{eq:p6-3}
\end{equation}
Meanwhile, by the definition of $\Phi(\vy^k, \vv^k; \vy^k,\vv^k)$, we have
\begin{equation}
    \Phi(\vy^k, \vv^k; \vy^k,\vv^k) = f(\vv^k; \vy^k) -[g(\vy^k) + \nabla g(\vy^k)^{\rm T} \cdot (\vy^k-\vy^k)] = f(\vv^k; \vy^k) - g(\vy^k) = \varphi(\vv^k; \vy^k).
    \label{eq:p6-4}
\end{equation}
Combining Equations \eqref{eq:p6-3} and \eqref{eq:p6-4} then implies that
\begin{equation}
    \Psi^k(\vy^k, \vv^k) = F(\vy^k, \vv^k) + \rho^k \cdot \varphi(\vv^k; \vy^k) = \psi(\vy^{k}, \vv^{k}; \rho^k).
    \label{eq:p6-m2}
\end{equation}

Finally, Equation~\eqref{eq:suff-de} is proved by combining Equations~\eqref{eq:p6-m0}, \eqref{eq:p6-m1}, and \eqref{eq:p6-m2}. \hfill\QED

\subsection{Proof of Proposition \ref{prop-welldefined}}  
\label{app:prop-welldefined}

First, we claim that there exists $k_0 \in \sN$ such that $\Phi(\vy^{k+1}, \vv^{k+1}; \vy^k, \vv^k) \leq \epsilon_3$ for all $k \geq k_0$. Suppose, to the contrary, that there exists a subsequence $\gK \subseteq \sN$ such that $\Phi(\vy^{k+1}, \vv^{k+1}; \vy^k, \vv^k) > \epsilon_3$ for all $k \in \gK$. By the penalty parameter updating rule in Step~(iii), it then follows that $\rho^k \to \infty$ as $k \to \infty$. Consequently, there must exist $k_1 \in \mathbb{N}$ such that
\[
\rho_{k} \geq \frac{b_u + \theta_u \cdot c_u - b_l}{\epsilon_3}, \quad \forall k \geq k_1.
\]
Now, applying Proposition \ref{prop:estimate}, we obtain that
\[
\Phi(\vy^{k+1}, \vv^{k+1}; \vy^k, \vv^k)  \leq \epsilon_3, \quad \forall k \geq k_1,
\]
which contradicts the assumption that $\Phi(\vy^{k+1}, \vv^{k+1}; \vy^k, \vv^k) > \epsilon_3$ for all $k \in \gK$.

By the updating rule in Step~(iii), the penalty parameter $\rho^k$ is no longer increased for all $k \geq k_0$, that is, $\rho^k = \rho^{k_0}$ for all $k \geq k_0$. Then, by Proposition~\ref{prop:decrease}, it follows that for all $k \geq k_0$,
\begin{equation}
\begin{split}
\psi(\vy^{k + 1}, \vy^{k + 1}; \rho^{k^0}) &\leq  \psi(\vy^{k}, \vy^{k}; \rho^{k^0}) - \rho^{k} \cdot \beta^{k} \cdot \|(\vy^{k + 1} - \vy^k, \vv^{k + 1} - \vv^k)\|_2^2\\ 
&\leq \psi(\vy^{k}, \vy^{k}; \rho^{k^0}) - \theta_l \cdot \|(\vy^{k + 1} -\vy^k, \vv^{k + 1} - \vv^k)\|_2^2, \label{pr6-eq1}
\end{split}
\end{equation}
where the second inequality follows from the enforcement of $\rho_k \cdot \beta_k \geq \theta_l$ in Step~(iv) of Algorithm~\ref{alg:main}. This indicates that the sequence $\{\psi(\vy^k, \vv^k; \rho^{k_0})\}_{k \geq k_0}$ is monotonically decreasing and thus convergent to a unique limit. Taking limits as $k \to \infty$ on both sides of Equation~\eqref{pr6-eq1} then yields 
\begin{equation}\label{limit}
(\vy^{k + 1} - \vy^k, \vv^{k + 1} - \vv^k) \to 0.
\end{equation}
Consequently, there must exist an integer $k_1 \geq k_0$ such that
\[
\|\vy^{k + 1} - \vy^k\|_2 \leq \epsilon_1 \quad \text{and} \quad \|\vv^{k + 1} - \vv^k\|_2 \leq \epsilon_2, \quad \forall k\geq k_1.
\]
Therefore, once $k > k_1$, the criteria in Step~(ii) of Algorithm~\ref{alg:main} will be satisfied. \hfill\QED

\subsection{Proof of Theorem \ref{thm-main}}
\label{app:thm-main}

Noting from Proposition~\ref{defi1} that $\nabla g(\vy)$ is continuous and from Proposition~\ref{prop-cardiset} that $\Upsilon_\tau$ is compact, the Heine–Cantor theorem ensures that $\nabla g(\vy)$ is uniformly continuous on $\Upsilon_\tau$.
Hence, there exists $\delta > 0$ such that as long as $\|\vy^{k + 1} - \vy^{k}\|_2 < \delta$, then
\begin{equation}\label{proof:eq2}
\|\nabla g(\vy^{k + 1})-\nabla g(\vy^{k})\|_2 \leq \frac{\epsilon}{2\sqrt{2} \cdot \rho^{k_0}}.
\end{equation}
We now set
$
    \epsilon_1 = \min\left\{\delta, \frac{\epsilon}{4\sqrt{2} \cdot \theta_u}\right\}.
$
By Proposition~\ref{prop-welldefined}, there exists $k_0 \in \mathbb{N}$ such that Algorithm~\ref{alg:main} terminates at $k = k_0$ with
\begin{equation}\label{proof:eq1}
    \|\vy^{k_0 + 1} - \vy^{k_0}\|_2\leq \epsilon_1,\quad \|\vv^{k_0 + 1} - \vv^{k_0}\|_2\leq \epsilon_2 = \frac{\epsilon}{2\sqrt{2} \cdot \theta_u} , \quad \text{and} \quad \Phi(\vy^{k_0+1}, \vv^{k_0+1}; \vy^k_0, \vv^k_0) \leq \epsilon_3 = \epsilon,
\end{equation}
where the last inequality immediately yields
\begin{equation}
    \varphi(\vy^{k_0 + 1}, \vv^{k_0 + 1}) \leq \epsilon.
    \label{eq:eps-fea}
\end{equation}
Moreover, by the choice of $\epsilon_1$, inequality~\eqref{proof:eq2} is satisfied, and we also have
\begin{equation}
    \|\vy^{k_0 + 1} - \vy^{k_0}\|_2 \leq \frac{\epsilon}{4\sqrt{2} \cdot \theta_u}.
    \label{eq:y-gap}
\end{equation}

Since $(\vy^{k_0+1}, \vv^{k_0+1})$ is a partially optimal solution to PPLA$_{k_0}$, it must satisfy the stationarity conditions of PPLA$_{k_0}$ \citet[see, e.g.,][Theorem~10.1]{rockafellar2009}. By Definition \ref{defi1},
\begin{align*}
\vzero \in \nabla_{\vy} F(\vy^{k_0+1}, \vv^{k_0+1}) + \rho^{k_0} \cdot [\nabla f(\vv^{k_0+1}; \vy^{k_0+1}) -\nabla g(\vy^{k_0})+2 \cdot \beta^{k_0} \cdot (\vy^{k_0+1}-\vy^{k_0})] + \cN_{\Upsilon_\tau}(\vy^{{k_0}+1}),\\
\vzero \in \nabla_{\vv} F(\vy^{k_0+1}, \vv^{k_0+1}) + \rho^{k_0} \cdot [\nabla_v f(\vv^{k_0+1}; \vy^{k_0+1})+ 2 \cdot \beta^k \cdot (\vv^{k_0+1} - \vv^{k_0})] + \cN_{\gV}(\vv^{k_0+1}).
\end{align*}
Setting $\bm \zeta_1^{k_0} = -2 \cdot \rho^{k_0} \cdot \beta^{k_0} \cdot (\vy^{{k_0}+1} - \vy^{k_0})+ \rho^{k_0} \cdot [\nabla g(\vy^{k_0}) - \nabla g(\vy^{{k_0} + 1})]$ and $\bm \zeta_2^{k_0} = - 2 \cdot \rho^{k_0} \cdot \beta^{k_0} \cdot (\vv^{{k_0}+1}-\vv^{k_0})$, we then obtain
\begin{eqnarray}
&& \bm \zeta_1^{k_0} \in \nabla_{\vy} F(\vy^{{k_0}+1}, \vv^{{k_0}+1}) + \rho^{k_0} \cdot [\nabla f(\vv^{{k_0}+1}; \vy^{{k_0}+1}) -\nabla g(\vy^{{k_0}+1})] + \cN_{\Upsilon_\tau}(\vy^{{k_0}+1}),\label{proof:eq4}\\
&& \bm \zeta_2^{k_0} \in \nabla_{\vv} F(\vy^{{k_0}+1}, \vv^{{k_0}+1}) + \rho^{k_0} \cdot \nabla_{\vv} f(\vv^{{k_0}+1}; \vy^{{k_0}+1}) + \cN_{\gV}.(\vv^{k_0+1}).\label{proof:eq5}
\end{eqnarray}
It can then be checked that
\begin{equation}
\begin{split}
\|\bm \zeta_1^{k_0}\|_2 &\leq  2 \cdot \rho^{k_0} \cdot \|\beta^{k_0} \cdot (\vy^{{k_0}+1}-\vy^{k_0})\|_2+ \rho^{k_0} \cdot \|\nabla g(\vy^{k_0})-\nabla g(\vy^{{k_0}+1})\|_2\\
&\leq  2 \cdot \theta_u \cdot \|\vy^{{k_0}+1}-\vy^{k_0}\|_2 + \rho^{k_0} \cdot \|\nabla g(\vy^{k_0}) - \nabla g(\vy^{{k_0}+1})\|_2 \\
&\leq  2\cdot \theta_u \cdot \frac{\epsilon}{4\sqrt{2} \cdot \theta_u} + \rho^{{k_0}} \cdot \frac{\epsilon}{2\sqrt{2} \cdot \rho^{{k_0}}} = \frac{\epsilon}{\sqrt{2}},
\label{eq:zeta_1}
\end{split}
\end{equation}
where the second inequality follows from the enforcement of $\rho^k \cdot \beta^k \leq \theta_u$ throughout Algorithm~\ref{alg:main}, and the third inequality uses Equations~ \eqref{proof:eq2} and \eqref{eq:y-gap}. Similarly, we can also obtain that
\begin{equation}
\|\bm \zeta_2^{k_0}\|_2= 2 \cdot \rho^{k_0} \cdot \beta^{k_0} \cdot  \|\vv^{{k_0}+1}-\vv^{k_0}\|\leq 2 \cdot \theta_u \cdot \frac{\epsilon}{2\sqrt{2} \cdot \theta_u}=\frac{\epsilon}{\sqrt{2}}.
\label{eq:zeta_2}
\end{equation}
Equations \eqref{eq:zeta_1} and \eqref{eq:zeta_2} combined together then ensure that
\begin{equation}
    \|(\bm \zeta_1^{k_0}, \bm \zeta_2^{k_0})\|_2 \leq \epsilon.
    \label{eq:zeta}
\end{equation}
Together with \eqref{eq:eps-fea}, \eqref{proof:eq4}, \eqref{proof:eq5}, and the fact that $\vy^{k} \in \Upsilon_\tau$ and $\vv^{k} \in \gV$ throughout Algorithm~\ref{alg:main}, this verifies that $(\vy^{{k_0}+1}, \vv^{{k_0}+1})$ is an $\epsilon$-approximate stationary point of CCBCEP-E~\eqref{equi-p}. \hfill\QED

\end{document}